\newtheorem{assumption}{Assumption}
\begin{document}

\title{NewVEM: A Newton Vertex Exchange Method for a Class of Constrained Self-Concordant Minimization Problems}

\titlerunning{A Newton Vertex Exchange Method}        

\author{Ling Liang         \and
        Kim-Chuan Toh \and \\
        Haizhao Yang
}


\institute{Ling Liang \at
              Department of Mathematics \\
              University of Tennessee, Knoxville \\
              1403 Circle Drive, Knoxville, TN 37916, USA \\
              \email{liang.ling@u.nus.edu}           
           \and
           Kim-Chuan Toh \at
           Department of Mathematics and Institute of Operations Research and Analytics \\ 
           National University of Singapore\\
           10 Lower Kent Ridge Road, Singapore 119076 \\
           \email{mattohkc@nus.edu.sg} 
           \and 
           Haizhao Yang \at 
           Department of Mathematics and Department of Computer Science\\
           University of Maryland, College Park\\
           4176 Campus Drive, College Park, MD 20742, USA \\
           \email{hzyang@umd.edu}
}

\date{Received: \today / Accepted: date}

\maketitle

\begin{abstract}
	
We propose \textbf{NewVEM}, a Newton vertex exchange method for efficiently solving self-concordant minimization problems under generalized simplex constraints. The algorithm features a two-level structure: the outer loop employs a projected Newton method, and the inner loop uses a vertex exchange approach to solve strongly convex quadratic subproblems. Both loops converge locally at linear rates under technical conditions, resulting in a ``fast $\times$ fast'' framework that demonstrates high efficiency and scalability in practice. To get a feasible initial point to execute the algorithm, we also present and analyze a highly efficient semismooth Newton method for computing the projection onto the generalized simplex. The excellent practical performance of the proposed algorithms is demonstrated by a set of numerical experiments. Our results further motivate the potential real-world applications of the considered model and the proposed algorithms.

\keywords{Self-concordant Minimization \and Generalized Simplex Constraint \and Project Newton Method \and Vertex Exchange Method \and Projection}
\subclass{90C06 \and 90C22 \and 90C25}
\end{abstract}

\section{Introduction}

A function $f:\mathbb{R}^n\to \mathbb{R}\cup \{+\infty\}$ is said to be self-concordant with parameter $M \geq 0$ if it is three-time continuously differentiable and the function $\phi(\tau):=f(x+\tau v)$ satisfies $|\phi'''(\tau)|\leq M\phi''(\tau)^{\frac{3}{2}}$ for any $\tau\in \mathrm{dom}(\phi)$, $x\in \mathrm{dom}(f)$, and $v\in\mathbb{R}^n$. Particularly, if $M=2$, we say that $f$ is standard self-concordant. Self-concordant minimization has attracted increasing attention in recent years because it ensures strong theoretical guarantees, enables robust practical algorithms, and forms the backbone of efficient second-order methods for a wide range of convex optimizations~\cite{boyd2004convex}. Reader are referred to~\cite{tran2015composite,sun2019generalized,liu2022newton} for a more detailed description of the class of self-concordant functions and many interesting examples arising from practical applications, including optimal experimental design~\cite{pukelsheim2006optimal,damla2008linear,lu2013computing,hendrych2023solving}, quantum tomography~\cite{gross2010quantum}, logistic regression~\cite{tran2022new}, portfolio optimization~\cite{ryu2014stochastic}, and optimal transport~\cite{peyre2019computational}.

Mathematically, we aim to solve the following generalized simplex constrained self-concordant minimization problem:
\begin{equation}
	\label{eq-sc-min}
	\min_{x\in \mathbb{R}^n}\; f(x)\quad \mathrm{s.t.}\quad e^Tx = b,\; \ell\leq x\leq u,
	\tag{SC}
\end{equation}
where the objective function $f$ is self-concordant, $e\in \mathbb{R}^n$ denotes the vector of all ones, and $b\in\mathbb{R}$, $\ell, u\in\mathbb{R}^n$ satisfying $-\infty < \ell \leq u < \infty$. Obviously, when $b = 1$, $\ell = 0$ and $u = e$, the feasible set of problem~\eqref{eq-sc-min} reduces to the standard simplex. In view of this, we call the feasible set of problem~\eqref{eq-sc-min}, denoted as $\mathcal{F}:=\{x\in\mathbb{R}^n\;:\; e^Tx = b, \;\ell\leq x\leq u\}$, as the {\em generalized simplex} for the rest of this paper. For notational simplicity, we denote $\mathcal{C}(\ell, u):=\{x\in\mathbb{R}^n\;:\;  \ell \leq x \leq u\}$ and assume that it is nonempty. It is clear that $\mathcal{F}$ is nonempty if and only if $e^T\ell \leq b \leq e^Tu$. Moreover, if either $e^T\ell =b$ or $e^Tu = b$ holds, then we see that the feasible set of is a singleton (i.e., $\{\ell\}$ or $\{u\}$) and there is no need to solve the problem. Also, if there exists an index $i\in[n]$ such that $\ell_i = u_i$, then we see that $x_i = \ell_i = u_i$ is fixed. In this case, we can fix this variable and consider a smaller-scale problem. Hence, without loss of generality, we make the following assumption for the rest of this paper. 
\begin{assumption}
	\label{assumption-feasible-set}
	The solution set of problem~\eqref{eq-sc-min} is nonempty, the Hessian $\nabla^2f(x)$ is nondegenerate for any $x\in \mathrm{dom}(f)$, and it holds that $e^T\ell < b < e^Tu$ and $-\infty < \ell < u < \infty$.
\end{assumption}
Under Assumption \ref{assumption-feasible-set}, we can also see that the Slater's condition~\cite{slater2013lagrange} holds, and for any feasible solution $x$, both sets $\{i\in[n]\;:\; x_i > \ell_i\}$ and $\{i\in[n]\;:\; x_i < u_i\}$ are nonempty. Moreover, the set $\mathcal{F}$ is nonempty, convex and compact. We shall mention here that when the feasible set is given by $a^Tx = b,\; \ell\leq x\leq u$ where all entries of $a\in\mathbb{R}^n$ are nonzero, by a change of decision variable $x' := \mathrm{Diag}(a) x$, we get a problem that is of the same form as~\eqref{eq-sc-min}.

Though the constraint set of problem~\eqref{eq-sc-min} is relatively simple, it admits strong modeling power due to the fact that many important problems can be modeled by using the generalized simplex constraint, including support vector machine~\cite{chen2024study}, portfolio optimization~\cite{shepp1982maximum,algoet1988asymptotic}, positron emission tomography~\cite{ben2001ordered,cornuejols2006optimization}, and optimal experimental design~\cite{bohning1986vertex,pukelsheim2006optimal,damla2008linear,lu2013computing,yang2020efficient,hendrych2023solving}, to mention just a few. Motivated by the essential roles of the self-concordant functions and the generalized simplex in numerous practical applications, our primary goal in this work is to develop an efficient, scalable and reliable solution method for solving the problem~\eqref{eq-sc-min}.  

As a convex optimization problem, \eqref{eq-sc-min} can be solved using interior point methods (IPMs) via conic programming or disciplined convex programming~\cite{nesterov1994interior,grant2006disciplined,lu2013computing}. However, IPMs often struggle with scalability due to its high computational cost per iteration, making them less suitable for large-scale problems. To address this, first-order methods have gained significant attention.

For the D-optimal experimental design problem (see Section \ref{subsec-eg}) with the standard simplex constraint, \cite{bohning1986vertex} proposed a vertex exchange method with guaranteed global convergence. However, its (local) linear convergence 
was not explored. In practice, the method typically requires a large number of iterations to achieve a desired accuracy, which is computationally expensive given the 
time-intensive nature of evaluating the objective function and its gradient. Furthermore, the convergence properties in~\cite{bohning1986vertex} do not extend to generalized simplex constraints, to the best of our knowledge~\cite{ahipacsaouglu2021branch}. Consequently, whether the vertex exchange method converges for problem \eqref{eq-sc-min} remains unclear.

The classical projected gradient method is another option, offering local linear convergence~\cite{tran2015composite}. However, it typically requires many iterations to reach the region where linear convergence applies. Additionally, each iteration involves carefully selecting a proximal metric (and step size) and performing projections onto the feasible set, which adds to the computational cost. These challenges motivate us to develop a line-search-free algorithmic framework coupled with an efficient routine for projecting onto the feasible set.

The Frank-Wolfe method (a.k.a the conditional gradient method) is another popular first-order approach that avoids projections with global convergence guarantees~\cite{frank1956algorithm}. Recent advances show that the away-step Frank-Wolfe method achieves affine-invariant and norm-independent global linear convergence rates for both the objective gap and the Frank-Wolfe gap~\cite{zhao2023analysis,zhao2023away}. However, it requires a line search at each iteration and is applicable only when the vertices of the feasible set are explicitly known (e.g., for the standard simplex). For general polyhedra, identifying the vertices is challenging~\cite{khachiyan2009generating}, limiting the practicality of Frank-Wolfe-type methods in real-world applications.

To address these issues, \cite{liu2022newton} proposed an inexact projected Newton method for solving \eqref{eq-sc-min}, which achieves local linear convergence. A key advantage of this approach is that, after a few iterations, the iterates are guaranteed to enter the region where linear convergence applies. This significantly reduces the total number of iterations, making it particularly beneficial when evaluating the objective function and its derivatives is computationally expensive. Each iteration of the inexact projected Newton method requires solving a strongly convex quadratic programming (QP) subproblem, which is handled using Frank-Wolfe methods in~\cite{liu2022newton}. When the feasible set is the standard simplex, the away-step Frank-Wolfe method ensures global linear convergence~\cite{lacoste2015global}. However, for general polyhedral sets, where identifying all vertices is computationally challenging, the away-step Frank-Wolfe method becomes less practical.

In this work, we propose solving the QP subproblems in the inexact projected Newton method using the vertex exchange method. Notably, the vertex exchange method not only guarantees global convergence but also enjoys a local linear convergence rate. This leads to a robust double-loop framework with strong convergence properties, where both the outer and inner loops are highly efficient, resulting in a ``fast $\times$ fast'' algorithm. Since the proposed algorithm requires a feasible starting point, we also develop and analyze an efficient algorithm for projecting onto the generalized simplex, ensuring a computationally inexpensive initialization. To evaluate the effectiveness of our approach, we conduct an extensive numerical study. Our results indicate that the proposed algorithm is not only easy to implement but also achieves superior practical performance compared to state-of-the-art solvers, highlighting its potential for large-scale constrained self-concordant minimization problems.

\subsection{Motivating example}\label{subsec-eg}

Next, we present a motivating example, demonstrating the critical role of solving the problem~\eqref{eq-sc-min} in the context of exact optimal experimental design~\cite{atkinson1993optimum}. Specifically, consider the problem
\begin{equation}\label{eq-experimental-design}
	\min_{x \in \mathbb{R}^n}\; f(x)
	\quad 
	\text{s.t.}
	\quad
	e^T x = b,
	\quad 
	x \in \mathbb{Z}_+^n,
\end{equation}
where $f(x)$ is the objective function, $b \in \mathbb{Z}_{++}$ is the given experimental budget, and the decision variable $x$ must take nonnegative integral values. In essence, the goal is to select exactly $b$ experiments so as to optimize $f(x)$. We assume that $f$ is self-concordant as defined earlier. Two well-known objective functions in this setting are:
\[
f(x) \;=\; \log \operatorname{tr}\bigl(A\,\mathrm{Diag}(x)\,A^T\bigr)^{-1}
\quad \text{and} \quad
f(x) \;=\; -\log \det\bigl(A\,\mathrm{Diag}(x)\,A^T\bigr),
\]
where $
A 
\;=\; 
\begin{bmatrix}
	a_1 & \dots & a_n
\end{bmatrix} 
\;\in\; \mathbb{R}^{p \times n}
$
represents the design space. These objective functions correspond to the famous A-optimal and D-optimal experimental designs, respectively, and both are self-concordant~\cite{hendrych2023solving}.

Due to the integral constraint $x \in \mathbb{Z}_+^n$, solving problem \eqref{eq-experimental-design} is generally NP-hard, making it extremely challenging to solve in practice. A commonly used approach for solving mixed-integer nonlinear programming (including problem~\eqref{eq-experimental-design}) is the branch-and-bound (BnB) method~\cite{lawler1966branch}. A key computational bottleneck of the BnB method is the need to solve a large number of optimization problems of a similar form as problem~\eqref{eq-sc-min}, where $\ell$ and $u$ vary as the BnB method iterates. Naturally, one can apply existing methods (e.g., interior-point methods, augmented Lagrangian methods, and operator splitting methods) to solve these subproblems. However, these techniques typically require solving linear systems at each iteration, which may render them unsuitable in the BnB framework due to their high per-iteration computational cost. In contrast, the inexact projected Newton method described in Algorithm~\ref{alg-pn} is a promising alternative.

\subsection{Outline}
The rest of this paper is organized as follows. We first present some notation and background information that is needed for our later exposition in Section \ref{section-preliminary}. Then in Section \ref{section-proj-newton}, we present the inexact projected Newton method for solving the problem \eqref{eq-qp} and its convergence properties. Next, we also present and analyze a fast semismooth Newton for computing the projection on the generalized simplex in Section~\eqref{section-proj-F}. To solve the quadratic subproblem at each iteration of the inexact projected Newton method, we present the vertex exchange method together with its convergence properties in Section~\ref{section-vem-qp}. Moreover, to evaluate the practical performance of the proposed method, we conduct a set of extensive numerical experiments in Section \ref{section-exp}. Finally, we provide some concluding remarks and future research directions in Section \ref{section-conclusions}.

\section{Preliminary} \label{section-preliminary}

We first introduce some notation that will be used throughout this paper. Let $x\in\mathbb{R}^n$ be a column vector and $i\in[n]:=\left\{1,\dots,n\right\}$, we use $x_i$ to denote the $i$-th entry of $x$. For a given matrix $A\in \mathbb{R}^{n\times n}$, the $(i, j)$-entry of $A$ is denoted as $A_{ij}$ and the $i$-th column of $A$ is denoted as $A_{:, i}$, for $1\leq i , j\leq n$. Moreover, the vector formed by all the diagonal entries of $A$ is denoted as $\mathrm{diag}(A)$. Similarly, for any vector $x\in\mathbb{R}^n$, the diagonal matrix in $\mathbb{R}^{n\times n}$ whose diagonal entries are those of $x$ is denoted as $\mathrm{Diag}(x)$.

Let $H\in\mathbb{S}^n_{++}$, we use $\left\langle \cdot, \cdot\right\rangle_H$ to denote the inner product with respect to $H$, i.e., $\left\langle x, y\right\rangle_H = x^THy$. Correspondingly, the induced weighted norm is denoted as $\|\cdot\|_H$. Conventionally, when $H$ is the identity matrix, we use $\left\langle\cdot,\cdot\right\rangle$ and $\|\cdot\|$ to denote the Euclidean inner product and Euclidean norm, respectively. 

Let $f:\mathbb{R}^n\to \mathbb{R}\cup \{\pm\infty\}$ be an extended real-valued function, we denote the effective domain of $f$ as $\mathrm{dom}(f):=\{x\in\mathbb{R}^n\;:\; f(x) <+\infty\}$. Let $f:\mathbb{R}^n\to \mathbb{R}\cup\{\pm \infty\}$ be an extended-valued convex function, we denote the convex conjugate function of $f$ as $f^*(z) := \sup\{\langle x, z\rangle - f(x)\; :\; x\in\mathbb{R}^n\}$, and the subdifferential of $f$ at a point $x\in\mathbb{R}^n$ as $\partial f(x):=\{v\in\mathbb{R}^n\;:\; f(x')\geq f(x) + v^T(x'-x),\;\forall x'\in \mathbb{R}^n\}$. 

For a given closed convex and nonempty subset $\mathcal{C}$ of $\mathbb{R}^n$, we use $\mathcal{I}_{\mathcal{C}}(\cdot)$ to denote the indicator function of $\mathcal{C}$, which is a closed proper convex function. Thus, the convex conjugate function of $\mathcal{I}_{\mathcal{C}}(\cdot)$ is denoted as $\mathcal{I}_{\mathcal{C}}^*(\cdot)$, which is also known as the support function of the set $\mathcal{C}$. 
Note that $\partial \mathcal{I}_{\mathcal{C}}(x)$ coincides with the indicator function of the normal cone of $\mathcal{C}$ at the reference point $x$ (i.e., $\mathcal{N}_{\mathcal{{C}}}(x)$). In particular, we have that $\mathcal{{N}_{\mathcal{C}}}(x):=\{v\in \mathbb{R}^n\;:\; v^T(x'-x)\leq 0,\; \forall x'\in \mathcal{C}\}$. Moreover, the projection onto the set $\mathcal{C}$ is denoted as $\mathrm{Proj}_{\mathcal{{C}}}(\cdot)$. When $\mathcal{C} = \mathcal{C}(\ell, u)$, it is known that  the support function of $\mathcal{C}(\ell, u)$ is given as (see e.g., \cite[Section 6.1]{banjac2019infeasibility})
\[
    \mathcal{I}_{ \mathcal{C}(\ell, u)}^*(z) = \ell^T\min\{z,0\} + u^T\max\{z, 0\},\quad \forall z\in\mathbb{R}^n,
\]
where ``min'' and ``max'' are applied element-wise. Moreover, by the definition of the normal cone, one can verify that for any $x\in \mathcal{C}(\ell, u)$, $v\in \mathcal{N}_{\mathcal{C}(\ell, u)}(x)$ admits the following expression (see e.g., \cite[Example 6.10]{rockafellar2009variational})
\begin{equation}
    \label{eq-normalcone-C}
    v_i \left\{
        \begin{array}{ll}
             = 0, & \ell_i < x_i < u_i, \\
             \leq 0, & x_i = \ell_i, \\
             \geq 0, & x_i = u_i,
        \end{array}
    \right.\quad \forall \;i \in [n],
\end{equation}
provided that $\ell < u$. We refer the reader to \cite{rockafellar1997convex,rockafellar2009variational} for more comprehensive studies of convex analysis and variational analysis.

Next, we present the definition of (strong) semismoothness of a nonsmooth mapping that is needed in analyzing the convergence of a certain Newton-type method; see \cite{mifflin1977semismooth,qi1993nonsmooth,sun2002semismooth,facchinei2003finite} and references therein for more details on semismoothness.

\begin{definition}
    \label{def-semi-smooth}
    Let $\mathbb{X}$ and $\mathbb{Y}$ be two finite dimensional Euclidean spaces. Let $\mathcal{M}:\mathcal{O}\subseteq \mathbb{X}\to \mathbb{Y}$ be a locally Lipschitz continuous function on the open set $\mathcal{O}$ \footnote{Hence, the Jacobi of $\mathcal{M}$, namely $\mathcal{M}'$, is well-defined almost everywhere in the sense of Lebesgue measure on $\mathcal{O}$ \cite{rademacher1919partielle}.} whose generalized Jacobian \cite{clarke1990optimization} is defined as $\partial \mathcal{M}(x):= \mathrm{conv}(\partial_B \mathcal{M}(x))$, where $\partial_B \mathcal{M}(x)$ denotes the B-subdifferential \cite{qi1993convergence} of $\mathcal{M}$ at $x\in\mathcal{O}$ that is defined as
    \[
        \partial_B \mathcal{M}(x):=\left\{\lim_{k\to\infty}\mathcal{M}'(x^k)\;:\; \lim_{k\to\infty}x^k = x,\; \mathcal{M}'(x^k) \textrm{ exists}\right\},
    \]
    and ``\textrm{conv}'' stands for the convex hull in the usual sense of convex analysis \cite{rockafellar1997convex}. Then, $\mathcal{M}$ is said to be semismooth at $x\in\mathcal{O}$ if $\mathcal{M}$ is directionally differentiable at $x$ and for any $\mathcal{V}\in\partial \mathcal{M}(x + \Delta x)$ with $\Delta x\to 0$, 
    \[
        \mathcal{M}(x + \Delta x) - \mathcal{M}(x) - \mathcal{V}\Delta x = o(\|\Delta x\|).
    \]
    Moreover, $\mathcal{M}$ is said to be strongly semismooth at $x\in\mathcal{O}$
    if the error term $o(\|\Delta x\|)$ above is replaced by $O(\|\Delta x\|^2).$
    Finally, $\mathcal{M}$ is said to be semismooth (respectively, strongly semismooth) on $\mathcal{O}$ if $\mathcal{M}$ is semismooth (respectively, strongly semismooth) at every point $x\in\mathcal{O}$.
\end{definition}

Roughly speaking, (strongly) semismooth mappings are locally Lipschitz continuous mappings for which the generalized Jacobians define legitimate approximation schemes, which turn out to be broad enough to include most of the interesting cases. Particularly, the following result is well-known. 
\begin{lemma}[{\cite[Propositions 7.4.4 \& 7.4.7]{facchinei2003finite}}]
    \label{lemma-semismooth-piecewise-linear}
    Every piecewise affine mapping from a finite dimensional Euclidean space to another is strongly semismooth everywhere. Moreover, the composition of strongly semismooth mappings is still strongly semismooth.
\end{lemma}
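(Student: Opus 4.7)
I would prove the two assertions separately.

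\textbf{Piecewise affine case.} The plan is to exploit the underlying polyhedral decomposition directly. Every piecewise affine $\mathcal{M}:\mathbb{X}\to\mathbb{Y}$ admits a finite decomposition $\mathbb{X}=\bigcup_{i=1}^N P_i$ into closed polyhedra on which $\mathcal{M}|_{P_i}(x)=A_ix+b_i$, with the affine pieces agreeing on overlaps by continuity. This immediately gives local Lipschitz continuity, and for small $t>0$ the segment $x+td$ stays in a fixed polyhedron, so the directional derivative $\lim_{t\downarrow 0}t^{-1}(\mathcal{M}(x+td)-\mathcal{M}(x))$ exists and equals some $A_id$. Now fix $x$ and let $I(x)=\{i:x\in P_i\}$. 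Because the remaining finitely many $P_j$ are closed and miss $x$, they are uniformly bounded away from $x$, so for $\Delta x$ small every piece containing $x+\Delta x$ is indexed in $I(x)$. By the definition of $\partial_B$, each element of $\partial_B\mathcal{M}(x+\Delta x)$ is a limit of classical Jacobians at differentiable points, forcing it to equal some $A_{i_k}$ with $x+\Delta x\in P_{i_k}$ and $i_k\in I(x)$. Consequently any $\mathcal{V}=\sum_k\lambda_k A_{i_k}\in\partial\mathcal{M}(x+\Delta x)$ satisfies $\mathcal{M}(x)=A_{i_k}x+b_{i_k}$ (since $x\in P_{i_k}$) and $\mathcal{M}(x+\Delta x)=A_{i_k}(x+\Delta x)+b_{i_k}$ (since $x+\Delta x\in P_{i_k}$). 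Subtracting and taking the convex combination gives $\mathcal{V}\Delta x=\mathcal{M}(x+\Delta x)-\mathcal{M}(x)$, so the semismoothness residual is identically zero, trivially $O(\|\Delta x\|^2)$.

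\textbf{Composition case.} Let $f:\mathbb{X}\to\mathbb{Y}$ and $g:\mathbb{Y}\to\mathbb{Z}$ be strongly semismooth, and set $h=g\circ f$. Local Lipschitz continuity of $h$ is immediate, and directional differentiability of $h$ follows from the B-differentiability of $f$ and $g$ implied by semismoothness together with the chain rule for B-derivatives. For the quadratic estimate, fix $x$, set $y=f(x)$ and $\Delta y=f(x+\Delta x)-f(x)=O(\|\Delta x\|)$. Strong semismoothness of $f$ at $x$ yields $\Delta y-V_f\Delta x=O(\|\Delta x\|^2)$ for every $V_f\in\partial f(x+\Delta x)$, while strong semismoothness of $g$ at $y$ yields $g(y+\Delta y)-g(y)-V_g\Delta y=O(\|\Delta y\|^2)=O(\|\Delta x\|^2)$ for every $V_g\in\partial g(y+\Delta y)$. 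Since $V_g$ is uniformly bounded for $\Delta x$ small (local Lipschitzness of $g$), combining the two bounds produces $h(x+\Delta x)-h(x)-V_gV_f\Delta x=O(\|\Delta x\|^2)$.

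\textbf{Main obstacle.} The delicate step is upgrading this estimate, which holds for any product $V_gV_f$, to the required bound for every $\mathcal{V}\in\partial h(x+\Delta x)$ in Definition~\ref{def-semi-smooth}. This demands a chain-rule containment of the form $\partial h(z)\subseteq\operatorname{conv}\{V_gV_f:V_g\in\partial g(f(z)),\;V_f\in\partial f(z)\}$ for $z$ near $x$. I would establish this by taking $z^k\to z$ along points where $h$, $f$, and $g$ are all classically differentiable (a full-measure set, by Rademacher's theorem combined with a preimage argument ensuring $f(z^k)$ avoids the exceptional set of $g$), applying the classical chain rule $h'(z^k)=g'(f(z^k))f'(z^k)$, extracting convergent subsequences of each Jacobian factor, and then passing to convex hulls via a Carath\'eodory-type argument. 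Once that inclusion is in hand, the previous paragraph applied to each $V_g,V_f$ generating $\mathcal{V}$ gives the required $O(\|\Delta x\|^2)$ bound for every element of $\partial h(x+\Delta x)$, completing the proof.
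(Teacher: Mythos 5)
First, note that the paper offers no proof of this lemma at all: it is quoted directly from \cite[Propositions 7.4.4 \& 7.4.7]{facchinei2003finite}. So the only question is whether your argument is sound on its own terms.

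Your piecewise-affine half is essentially the standard argument and is correct. Granting the (standard, but nontrivial if "piecewise affine" is defined via a finite selection of affine maps) fact that $\mathcal{M}$ is affine on the pieces of a finite polyhedral subdivision, your key observations -- that for $\|\Delta x\|$ small every piece containing $x+\Delta x$ already contains $x$, and that every element of $\partial_B\mathcal{M}(x+\Delta x)$ equals some $A_{i}$ with both $x$ and $x+\Delta x$ in $P_{i}$ -- make the residual vanish identically, which is stronger than the required $O(\|\Delta x\|^2)$. Two points you gloss over are routine but worth acknowledging: the existence of the subdivision itself, and the fact that at a point of differentiability lying on the boundary of several pieces the Jacobian must coincide with one of the active $A_i$ (argue on an open cone of directions entering an adjacent full-dimensional piece).

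The composition half has a genuine gap, and it sits exactly at the step you flag. The inclusion $\partial h(z)\subseteq\mathrm{conv}\{V_gV_f:\,V_g\in\partial g(f(z)),\,V_f\in\partial f(z)\}$ is not a standard theorem, and the route you propose for it does not work: Rademacher's theorem gives a full-measure set of differentiability points of $g$ in $\mathbb{Y}$, but its preimage under $f$ need not have full measure in $\mathbb{X}$, since a Lipschitz $f$ can map a set of positive (even full) measure into the null set where $g$ is nondifferentiable. For instance, if $g(u,v)=|v|$ and $f(z)=(z_1,0)$, there is no point $z^k$ whatsoever at which $g$ is differentiable at $f(z^k)$, so the limiting-Jacobian construction cannot even begin. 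Two standard repairs: (i) invoke Clarke's chain rule in the form in which it is actually true, namely componentwise, $\partial(g_j\circ f)(z)\subseteq \mathrm{conv}\{\sum_i\alpha_i\zeta_i:\alpha\in\partial g_j(f(z)),\ \zeta_i\in\partial f_i(z)\}$, together with the fact that the $j$-th row of any $\mathcal{V}\in\partial h(z)$ lies in $\partial h_j(z)$; your two-term estimate then goes through verbatim with the rows $\zeta_i$ chosen independently, because each component $f_i$ is itself strongly semismooth. Or (ii) bypass generalized Jacobians of the composite altogether by using the equivalent characterization of (strong) semismoothness through directional derivatives (local Lipschitz continuity, directional differentiability near $x$, and $F(x+d)-F(x)-F'(x+d;d)=O(\|d\|^2)$), which composes directly via $h'(z;d)=g'(f(z);f'(z;d))$; this is essentially how the cited result in \cite{facchinei2003finite} is obtained. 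With either repair, the rest of your composition estimate (boundedness of $V_g$ from the local Lipschitz modulus of $g$, and $\|\Delta y\|=O(\|\Delta x\|)$) is fine.
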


\section{An inexact projected Newton method}\label{section-proj-newton}

In this section, we describe the inexact projected Newton method of~\cite{liu2022newton} for solving problem~\eqref{eq-sc-min} and present its strong convergence properties. 

The following notation will be useful in our later exposition. Let $x\in \mathrm{dom}(f)$ and assume that $\mathrm{dom}(f)$ does not contain straight lines, it holds that $\nabla^2 f(x)$ is positive definite. In this case, we denote the local norm associated with $f$ together with its dual norm as $\|\cdot\|_x:=\|\cdot\|_{\nabla^2 f(x)}$ and $\|\cdot\|_x^*:=\|\cdot\|_{(\nabla^2 f(x))^{-1}}$, respectively, for notational simplicity. The following univariate function $h:\mathbb{R}_+\to \mathbb{R}_+$ defined as follows is also useful:
\[
h(\tau):=\frac{\tau(1-2\tau + 2\tau^2)}{(1-2\tau)(1-\tau)^2 - \tau^2},\quad \tau\geq 0.
\]
It is easy to see that $h$ is monotonically increasing and hence its inverse function $h^{-1}(\tau)$ is well-define for $\tau > 0$. Using these notation, the inexact projected Newton method is presented in Algorithm \ref{alg-pn}.

\begin{algorithm}[htb!]
	\caption{An inexact projected Newton method for constrained self-concordant minimization.}
	\label{alg-pn}
	\begin{algorithmic}[1]
		\STATE \textbf{Input:} $x^0\in \mathrm{dom}(f)\cap \mathcal{F}$, parameters $\beta\in (0, 1/20),\; \sigma\in (0,1), C > 1$ such that $1/(C(1-\beta)) + \beta/((1-2\beta)(1-\beta)^2) \leq \sigma$ and $1/C + 1/(1-2\beta) \leq 2$, $C_1\in (0, 1/2)$ and $\delta\in (0,1)$.
		\STATE Set $\lambda^{-1} = \frac{\beta}{\sigma}$ and $\xi^0 = \min\{\beta/C, C_1h^{-1}(\beta)\}$.
		\FOR{$k\geq 0$}
		\STATE Compute an approximate solution $\tilde{x}^{k+1}$ of the following QP
		{by using Algorithm \ref{alg-qp} with $\tilde{x}^k$ as the initial point:} 
		\[
		\min_{x\in\mathbb{R}^n}\; q_k(x):=\frac{1}{2}(x-x^k)^T\nabla^2f(x^k)(x-x^k) + \nabla f(x^k)^T(x-x^k)\quad \mathrm{s.t.}\quad x\in\mathcal{F},
		\]
		in the sense that 
		\begin{equation}\label{eq-alg-pn-inexactness}
			\max_{x\in\mathcal{F}}\; \nabla q_k(\tilde{x}^{k+1})^T(\tilde{x}^{k+1} - x) \leq \xi^k.
		\end{equation}
		
		\STATE Set $\Delta x^k = \tilde{x}^{k+1} - x^k$ and compute $\gamma^k = \|\Delta x^k\|_{x^k}$.
		
		\IF{$\gamma^k+\xi^k\leq h^{-1}(\beta)$ or $\lambda^{k-1}\leq \beta$}
		\STATE Set $\lambda^k = \sigma \lambda^{k-1}$, $\xi^{k+1} = \sigma \xi^k$, and $x^{k+1} = \tilde{x}^{k+1}$.
		\ELSE 
		\STATE Set $\lambda^k = \lambda^{k-1}$, $\xi^{k+1} =  \xi^k$, and $x^{k+1} = x^k + \frac{\delta ((\gamma^k)^2 - (\xi^k)^2)}{(\gamma^k)^3+ (\gamma^k)^2 - (\xi^k)^2\gamma^k}\Delta x^k$.
		\ENDIF
		\IF{$\lambda^k = 0$}
		\STATE \textbf{Output:} $x^{k+1}$.
		\ENDIF
		\ENDFOR 
	\end{algorithmic}
\end{algorithm}

The convergence properties of Algorithm \ref{alg-pn} are summarized as follows; see \cite{liu2022newton} for the detailed analysis.

\begin{theorem}
	\label{thm-convergence-pn}
	Let $\omega(\tau):=\tau - \log(1+\tau)$ and set 
	\[
	K:= \frac{f(x^0) - f(x^*)}{\delta\omega(h^{-1}(\beta)(1-2C_1)/(1-C_1))},
	\]
	where $x^*$ is an optimal solution of problem \eqref{eq-sc-min}. Then, there exists at least one $k\in [K]$ such that $\gamma^k+\xi^k\leq h^{-1}(\beta)$. Moreover, for $k\geq K$, it holds that 
	\[
	\|x^k - x^*\|_{x^*} = O(\sigma^k).
	\]
\end{theorem}

We see that after a finite number of iterations $K$, Algorithm~\ref{alg-pn} is guaranteed to take the full step, which leads to a linear convergence rate. Our numerical experience indicates that $K$ is typical very small, hence, the total number of iterations needed for Algorithm~\ref{alg-pn} to produce a highly accurate solution is also small. This makes Algorithm~\ref{alg-pn} a strong candidate when compared with the projected gradient method and the Frank-Wolfe method.

\section{Fast projection onto $\mathcal{F}$} \label{section-proj-F} 

Recall that the feasible set of problem \eqref{eq-qp} is denoted as 
\[
\mathcal{F}:=\{x\in\mathbb{R}^n\;:\; e^Tx = b, \;\ell\leq x\leq u\},
\]
where $e^T\ell < b < e^Tu$ and $-\infty < \ell < u < \infty$ under Assumption \ref{assumption-feasible-set}. Our goal in this subsection is to propose an efficient algorithm for computing the projection onto the set $\mathcal{F}$ to high precision (e.g., machine precision), which is needed at the beginning of Algorithm \ref{alg-qp}. Mathematically speaking, we aim at solving the following special form of a strongly convex QP problem:
\begin{equation}
	\label{eq-proj-F}
	\min_{x\in\mathbb{R}^n}\; \frac{1}{2}\|x - \bar{x}\|_2^2\quad \mathrm{s.t.}\quad  x\in\mathcal{F},
\end{equation}
where $\bar{x} \in\mathbb{R}^n$ is the given point to be projected. It is clear that problem \eqref{eq-proj-F} is a special case of problem \eqref{eq-qp} with $Q := I_n$ and $c := -\bar{x}$, respectively. According to \eqref{eq-qp-dual}, the Lagrange dual problem (reformulated as an equivalent minimization problem) of problem \eqref{eq-proj-F} reads
as follows:
\begin{equation}
	\label{eq-proj-F-dual}
	\min_{y\in\mathbb{R},\; w\in \mathbb{R}^n,\; z\in\mathbb{R}^n}\; -by + \frac{1}{2}\|w\|^2 + \mathcal{I}_{\mathcal{C}(\ell, u)}^*(-z)\quad \mathrm{s.t.}\quad -w + ye + z = -\bar{x}, 
\end{equation}
and the associated KKT conditions are given by 
\begin{equation}
	\label{eq-kkt-proj-F}
	e^Tx = b,\quad x\in \mathcal{C}(\ell, u),\quad -w + ye + z = -\bar{x},\quad x-w = 0,\quad -z\in \mathcal{N}_{\mathcal{C}(\ell, u)}(x).
\end{equation}
Recall that the Slater condition implies that the above system of KKT conditions \eqref{eq-kkt-proj-F} admits at least one solution \cite[Theorem 2.165]{bonnans2013perturbation}. Note also that the conditions $x\in\mathcal{C}(\ell, u)$ and $-z\in \mathcal{N}_{\mathcal{C}(\ell, u)}(x)$ are equivalent to the following nonsmooth equation:
\[
x - \mathrm{Proj}_{\mathcal{C}(\ell, u)} (x - z) = 0,
\]
which implies that for any KKT point $(x,y,w,z)$ satisfying \eqref{eq-kkt-proj-F}, it holds that 
\begin{equation}
	\label{eq-w}
	w = \mathrm{Proj}_{\mathcal{C}(\ell, u)} (ye+\bar{x}),\quad z = \mathrm{Proj}_{\mathcal{C}(\ell, u)} (ye+\bar{x}) - (ye+\bar{x}).
\end{equation}

Substituting the above relations \eqref{eq-w} into the dual problem \eqref{eq-proj-F-dual}, we derive an equivalent optimization with only one decision variable:
\begin{equation}
	\label{eq-proj-F-dual-new}
	\min_{y\in\mathbb{R}}\; \phi(y):= \frac{1}{2}\|\mathrm{Proj}_{\mathcal{C}(\ell, u)} (ye+\bar{x})\|^2 - by + \delta_{\mathcal{C}(\ell, u)}^*(ye+\bar{x} -\mathrm{Proj}_{\mathcal{C}(\ell, u)} (ye+\bar{x})).
\end{equation}
It is clear that the objective function $\phi:\mathbb{R}\to\mathbb{R}$ is continuously differentiable and convex, thus solving the problem \eqref{eq-proj-F-dual-new} is equivalent to solving the following nonsmooth equation:
\begin{equation}
	\label{eq-proj-non-smooth}
	\phi'(y):= e^T\mathrm{Proj}_{\mathcal{C}(\ell, u)} (ye+\bar{x}) - b = 0,\quad y\in\mathbb{R}.
\end{equation}
Here, the continuous differentiability and the expression of the derivative of $\phi$ is based on the elegant properties of the Moreau envelope for a convex function; see e.g., \cite[Theorem 2.26]{rockafellar2009variational} for more details. After obtaining an optimal solution $\bar{y}$ of the above problem \eqref{eq-proj-F-dual-new}, the optimal solution of problem \eqref{eq-proj-F} can be constructed using 
\[
\mathrm{Proj}_{\mathcal{F}}(\bar{x}) = \mathrm{Proj}_{\mathcal{C}(\ell, u)} (\bar{y}e+\bar{x}).
\]
The remaining task in this section is to describe a Newton-type method for solving the nonsmooth equation \eqref{eq-proj-non-smooth} efficiently and accurately.

Though the derivative of $\phi$ is nonsmooth, one is able to show that it is strongly semismooth, which facilitates our design of efficient second-order algorithms, as we shall see shortly. Indeed, we see from Lemma \ref{lemma-semismooth-piecewise-linear} that the piecewise affine mapping $\mathrm{Proj}_{\mathcal{C}(\ell, u)}$ is strongly semismooth on $\mathbb{R}^n$. Hence, $\phi'$ defined in \eqref{eq-proj-non-smooth}, as the composition of strongly semismooth functions, is also strongly semismooth. To derive a second-order approximation of the function $\phi$, we then need to characterize the generalized Jacobian of $\phi'$, i.e., $\partial (\phi')$. Note that $\partial (\phi')$ is also known as the generalized Hessian of $\phi$ in the literature \cite{hiriart1984generalized}, hence, it is usually denoted as $\partial^2\phi$. Using $\partial^2\phi$, one is able to establish a ``second-order Taylor expansion'' of the continuously differentiable function $\phi$ with local Lipschitz continuous derivatives, as stated in the following lemma.

\begin{lemma}[{\cite{hiriart1984generalized}}]
	\label{lemma-taylor-expansion}
	For any $y$ and $\hat{y}$, it holds that 
	\[
	\phi(\hat{y}) = \phi(y) + \phi'(y) (\hat{y}-y) + \frac{1}{2} v  (\hat{y}-y)^2,
	\]
	where $v\in \partial^2\phi(y'')$, for some $y''$ lying between $y$ and $\hat{y}$. 
\end{lemma}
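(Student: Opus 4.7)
The plan is to adapt the classical proof of Taylor's theorem with Lagrange remainder, replacing the appeal to a second derivative by Clarke's nonsmooth mean value theorem (Lebourg's theorem) applied to $\phi'$. As noted in the paragraph preceding the lemma, $\phi'$ is strongly semismooth and hence locally Lipschitz, so the generalized Jacobian $\partial(\phi')$ used to define $\partial^2\phi$ is well-defined, nonempty, compact, and convex. The one-dimensional setting keeps the argument short. First, I would dispense with the trivial case $\hat y = y$ by taking any $v \in \partial^2\phi(y)$. For $\hat y \neq y$, I would introduce the remainder constant
\[
\alpha := \phi(\hat y) - \phi(y) - \phi'(y)(\hat y - y),
\]
so that the conclusion reduces to showing $\alpha = \tfrac{1}{2} v (\hat y - y)^2$ for some $v \in \partial^2\phi(y'')$ with $y''$ strictly between $y$ and $\hat y$. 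Next, I would form the auxiliary function
\[
G(t) := \phi(y + t(\hat y - y)) - \phi(y) - t\,\phi'(y)(\hat y - y) - \alpha t^2, \quad t \in [0,1],
\]
which is $C^1$ and satisfies $G(0) = G(1) = 0$ by construction. Rolle's theorem then yields some $t^* \in (0,1)$ with $G'(t^*) = 0$, that is,
\[
\bigl[\phi'(y + t^*(\hat y - y)) - \phi'(y)\bigr](\hat y - y) = 2\alpha t^*.
\]

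Next, I would apply Lebourg's nonsmooth mean value theorem to the locally Lipschitz function $\phi'$ between the points $y$ and $y + t^*(\hat y - y)$. This produces some $\tau \in (0, t^*)$ and some $v \in \partial(\phi')(y + \tau(\hat y - y)) = \partial^2\phi(y + \tau(\hat y - y))$ for which
\[
\phi'(y + t^*(\hat y - y)) - \phi'(y) = v \cdot t^*(\hat y - y).
\]
Substituting this into the identity obtained from Rolle's theorem collapses the coefficients to $v \cdot t^* (\hat y - y)^2 = 2\alpha t^*$, so $\alpha = \tfrac{1}{2} v (\hat y - y)^2$. Setting $y'' := y + \tau(\hat y - y)$ yields a point strictly between $y$ and $\hat y$ of the required form, and substituting back into the definition of $\alpha$ recovers exactly the displayed formula of the lemma.

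The main obstacle is conceptual rather than computational: it lies in recognizing that the desired ``second-order expansion'' for $C^{1,1}$ functions is obtained by pairing a single application of the classical Rolle's theorem to the quadratic auxiliary function $G$ with a single application of Lebourg's theorem to $\phi'$, rather than by an integral-remainder argument that would otherwise force a more delicate upper-semicontinuity and convex-hull analysis of the set-valued mapping $\partial^2\phi$. Once the identification $\partial^2\phi = \partial(\phi')$ that the paper already records is taken seriously, no further regularity or measurable selection step is needed, and the two mean value theorems combine linearly to deliver the result.
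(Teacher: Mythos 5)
Your proof is correct. Note first that the paper does not prove this lemma at all: it is imported verbatim from the cited reference \cite{hiriart1984generalized}, so there is no in-paper argument to compare against. Your self-contained one-dimensional proof is sound and is essentially the mechanism behind the cited result: the remainder constant $\alpha$, the auxiliary function $G(t)=\phi(y+t(\hat y-y))-\phi(y)-t\,\phi'(y)(\hat y-y)-\alpha t^2$ with $G(0)=G(1)=0$, Rolle's theorem to get $[\phi'(y+t^*(\hat y-y))-\phi'(y)](\hat y-y)=2\alpha t^*$, and then Lebourg's mean value theorem applied to the locally Lipschitz function $\phi'$ on the segment from $y$ to $y+t^*(\hat y-y)$, using the identification $\partial^2\phi=\partial(\phi')$ that the paper adopts as the definition of the generalized Hessian. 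Cancelling $t^*>0$ gives $\alpha=\tfrac12 v(\hat y-y)^2$ with $v\in\partial^2\phi(y'')$ and $y''=y+\tau(\hat y-y)$ strictly between $y$ and $\hat y$; the degenerate case $\hat y=y$ is handled by nonemptiness of $\partial^2\phi(y)$. All hypotheses are available in the present setting, since $\phi'(y)=e^T\mathrm{Proj}_{\mathcal{C}(\ell,u)}(ye+\bar x)-b$ is globally Lipschitz (the projection is $1$-Lipschitz), so $\phi$ is $C^{1,1}$ and Lebourg's theorem applies. Compared with an integral-remainder route (using that $\phi''$ exists a.e.), your argument indeed sidesteps any measurable-selection or upper-semicontinuity analysis of $\partial^2\phi$, at the modest price of invoking Clarke's mean value theorem; in one dimension this is the cleaner option.
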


However, an explicit description of the set $\partial^2\phi(y)$ at a given $y$ is generally not available. Fortunately, \cite{hiriart1984generalized} shows that
\[
\partial^2\phi(y)  \Delta y \subseteq \hat{\partial}^2\phi(y)  \Delta y,\quad \forall\; \Delta y\in \mathbb{R},
\]
where $\hat{\partial}^2\phi(y)$ is defined as 
\[
\hat{\partial}^2\phi(y):=e^T\partial \mathrm{Proj}_{\mathcal{C}(\ell, u)}(ye+\bar{x})e\subseteq \mathbb{R}_+,\quad\forall\; y\in \mathbb{R},
\]
and a diagonal matrix $\mathcal{V} \in \mathbb{S}_+^n$ defined below is an element of $\partial \mathrm{Proj}_{\mathcal{C}(\ell, u)}(ye+\bar{x})$:
\begin{equation}
	\label{eq-Jacobian-proj-C}
	\mathcal{V}_{ij} := \left\{
	\begin{array}{ll}
		1, & \textrm{if } \ell_i \leq y + \bar{x}_i \leq u_i \textrm{\; and } i = j, \\
		0, & \textrm{otherwise.}
	\end{array}
	\right.
\end{equation}
Consequently, we can replace $v\in \partial^2\phi(y'')$ with $e^T\mathcal{V}e$ in the above lemma.

With the previous preparations, we can now present the semismooth Newton (SSN) method \cite{qi1993nonsmooth,li2020efficient} for solving the nonsmooth equation \eqref{eq-proj-non-smooth} in Algorithm \ref{alg-ssn}. In particular, Algorithm \ref{alg-ssn} together with its convergence properties is a direct application of the one proposed in \cite{li2020efficient}. However, to make the paper self-contained, we have included an analysis tailored specifically to our special settings; see also \cite{zhao2010newton,liang2021inexact}. 

\begin{algorithm}[htb!]
	\caption{A semismooth Newton method for problem \eqref{eq-proj-F-dual-new}.}
	\label{alg-ssn}
	\begin{algorithmic}[1]
		\STATE \textbf{Input:} $\bar{x}\in\mathbb{R}^n$, $b\in \mathbb{R}$, $\ell, u \in\mathbb{R}^n$ satisfying Assumption \ref{assumption-feasible-set}, $\delta\in(0,1)$, $\mu\in (0,1/2)$ and $\tau_1,\tau_2\in(0,1)$. 
		\STATE Choose $y^0\in\mathbb{R}$.
		\FOR{$k\geq 0$}
		\IF{$|\phi'(y)| = 0$}
		\STATE \textbf{Output: } $x^k$.
		\ENDIF
		
		\STATE Choose $\mathcal{V}^k\in\partial\mathrm{Proj}_{\mathcal{C}(\ell, u)}(y^ke + \bar{x})$ as in \eqref{eq-Jacobian-proj-C}. 
		
		\STATE Set $v^k:=e^T\mathcal{V}^ke$ and $\varepsilon^k:= \tau_1\min\{\tau_2, |\phi'(y^k)|\}$.
		
		\STATE Compute $\Delta y^k:= - \frac{1}{v^k+\varepsilon^k}\phi'(y^k)$.
		
		\STATE Set $\alpha^k:=\delta^{m_k}$, where $m_k$ is the smallest nonnegative integer $m$ for which 
		\[
		\phi(y^k + \delta^m \Delta y^k) \leq \phi(y^k) + \mu\delta^m \phi'(y^k)\Delta y^k.
		\]
		
		\STATE Set $y^{k+1} = y^k + \alpha^k \Delta y^k$.
		\ENDFOR 
	\end{algorithmic}
\end{algorithm}

The convergence properties of Algorithm \ref{alg-ssn} are summarized in the following theorem. Here again, we assume without loss of generality that Algorithm \ref{alg-ssn} generates an infinite sequence, denoted as $\{y^k\}_{k\geq 0}$. Otherwise, there exists $K$ such that $\phi'(y^K) = 0$. Then, due to the convexity of the function $\phi$, the output point $y^K$ must be an optimal solution for problem \eqref{eq-proj-F-dual-new}. 

\begin{theorem}
	\label{thm-convergence-ssn}
	The Algorithm \ref{alg-ssn} is well-defined. Let $\{y^k\}$ be an infinite sequence generated by Algorithm \ref{alg-ssn}. Then the whole sequence $\{y^k\}$ is bounded and any accumulation point of the sequence is an optimal solution to problem \eqref{eq-proj-F-dual-new}. Let $\bar{y}$ be an arbitrary accumulation point of $\{y^k\}$, assume that $\{i\in[n]\;:\; \ell_i < \bar{y} + \bar{x}_i < u_i\} \neq \emptyset $. Then, the whole sequence $\{y^k\}$ converges to $\bar{y}$ and 
	\begin{equation}
		\label{eq-thm-ssn-quad}
		|y^{k+1} - \bar{y}| = O(|y^k-\bar{y}|^2),\quad \textrm{as } k\to \infty. 
	\end{equation}
\end{theorem}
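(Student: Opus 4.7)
The plan is to break the proof into three blocks: well-definedness of the iteration, global convergence of accumulation points, and local quadratic convergence. Throughout I will rely on the fact that $\phi$ is convex and $C^1$ with $\phi'$ strongly semismooth (by Lemma~\ref{lemma-semismooth-piecewise-linear}), so that the generalized Hessian framework of Lemma~\ref{lemma-taylor-expansion} applies.

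First I would handle well-definedness. Since $v^k = e^T \mathcal{V}^k e \geq 0$ by \eqref{eq-Jacobian-proj-C}, and $\varepsilon^k > 0$ whenever $\phi'(y^k)\neq 0$, the Newton direction $\Delta y^k$ is unambiguously defined, and a direct computation gives $\phi'(y^k)\Delta y^k = -(\phi'(y^k))^2/(v^k+\varepsilon^k) < 0$, so $\Delta y^k$ is a strict descent direction. Continuity of $\phi$ then guarantees the Armijo backtracking loop terminates in finitely many steps, so $\alpha^k$ is well-defined.

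Next I would argue boundedness and optimality of accumulation points. The key observation is that $\phi$ is coercive: as $y\to+\infty$ one has $\mathrm{Proj}_{\mathcal{C}(\ell,u)}(ye+\bar{x})\to u$ component-wise, so $\phi'(y)\to e^Tu-b>0$ by Assumption~\ref{assumption-feasible-set}; symmetrically $\phi'(y)\to e^T\ell-b<0$ as $y\to-\infty$. Hence every sub-level set of $\phi$ is bounded. Because the Armijo rule enforces $\phi(y^{k+1})\leq \phi(y^k)$, the iterates remain in $\{y : \phi(y)\leq\phi(y^0)\}$ and are bounded. A standard line-search argument (summing the Armijo descent) combined with $v^k \leq n$ (hence $\varepsilon^k$ and $v^k+\varepsilon^k$ stay bounded) forces $\phi'(y^k)\to 0$ along the whole sequence; continuity of $\phi'$ then implies $\phi'(\bar y)=0$ at every accumulation point $\bar y$, which by convexity of $\phi$ means $\bar y$ solves \eqref{eq-proj-F-dual-new}.

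For the local quadratic rate, fix an accumulation point $\bar y$ and use the assumption that $I(\bar y):=\{i : \ell_i < \bar y+\bar x_i < u_i\}$ is nonempty. By \eqref{eq-Jacobian-proj-C}, for $y^k$ sufficiently close to $\bar y$ the indices in $I(\bar y)$ contribute to $\mathcal{V}^k$, so $v^k \geq |I(\bar y)| \geq 1$, uniformly bounded away from zero. Since $\varepsilon^k = \tau_1\min\{\tau_2,|\phi'(y^k)|\}\to 0$, the unit-step update satisfies, using the semismooth expansion of Lemma~\ref{lemma-taylor-expansion} and the fact that $\hat\partial^2\phi$ is a valid surrogate,
\begin{equation*}
|y^k+\Delta y^k-\bar y| \;=\; \frac{1}{v^k+\varepsilon^k}\bigl|(v^k+\varepsilon^k)(y^k-\bar y)-\phi'(y^k)+\phi'(\bar y)\bigr| \;=\; O(|y^k-\bar y|^2),
\end{equation*}
where strong semismoothness of $\phi'$ is used to absorb the residual. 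Combined with a Taylor estimate of $\phi$ around $y^k$ and the choice $\mu<1/2$, this further shows that the full step $\alpha^k=1$ eventually satisfies the Armijo condition, so that $y^{k+1}=y^k+\Delta y^k$ for all large $k$. The displayed quadratic bound then yields \eqref{eq-thm-ssn-quad}, and since $|y^{k+1}-\bar y|\leq c|y^k-\bar y|^2$ in a neighborhood of $\bar y$, once one subsequence enters that neighborhood the whole sequence stays in it and converges to $\bar y$.

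The main obstacle I anticipate is the acceptance of the unit step: making precise that the semismooth Taylor expansion of Lemma~\ref{lemma-taylor-expansion} together with $\mu<1/2$ and $\varepsilon^k\to 0$ implies $\phi(y^k+\Delta y^k)-\phi(y^k)\leq \mu\,\phi'(y^k)\Delta y^k$ for all large $k$, so that no backtracking eventually occurs and the quadratic rate is not spoiled by the step-size rule. Everything else is a fairly standard adaptation of the arguments in \cite{li2020efficient,zhao2010newton} to the one-dimensional setting here.
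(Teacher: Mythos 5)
Your proposal is correct and follows essentially the same route as the paper's proof: descent property of $\Delta y^k$ for well-definedness, boundedness of the level set, a standard Armijo/line-search analysis to show accumulation points are stationary (hence optimal by convexity), the nondegeneracy assumption to get $v^k\geq 1$ near $\bar{y}$, strong semismoothness of $\phi'$ plus $\varepsilon^k=O(|\phi'(y^k)|)$ for the quadratic estimate on the unit step, Lemma~\ref{lemma-taylor-expansion} with $\mu<1/2$ to show the full step is eventually accepted, and then an induction to pass from a subsequence to the whole sequence. Two small differences are worth noting. First, for boundedness you argue coercivity directly from $\phi'(y)\to e^Tu-b>0$ as $y\to+\infty$ and $\phi'(y)\to e^T\ell-b<0$ as $y\to-\infty$ under Assumption~\ref{assumption-feasible-set}; the paper instead cites Rockafellar's level-set theorem for the dual function. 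Your version is more elementary and self-contained, and it is valid. Second, your statement that ``summing the Armijo descent forces $\phi'(y^k)\to 0$'' is, as written, only half the argument: the summation yields $\alpha^k(\phi'(y^k))^2\to 0$, and one must still rule out the case $\alpha^k\to 0$ along a subsequence where $|\phi'(y^{k_j})|$ stays bounded away from zero, via the failed Armijo inequality at step $m_{k_j}-1$ divided by $\delta^{m_{k_j}-1}$ (this is where $\mu<1$ and the boundedness of $\Delta y^{k_j}$, hence $\varepsilon^{k_j}$ bounded below, enter); this is exactly the contradiction argument the paper carries out, and it is what your appeal to a ``standard line-search argument'' must be understood to include. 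With that step made explicit, and with the neighborhood in the final contraction argument taken small enough that $c|y^k-\bar{y}|<1$, your outline matches the paper's proof.
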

\begin{proof}
	See Appendix \ref{section-appendix}.
\end{proof}

\section{A vertex exchange method}\label{section-vem-qp}

In this section, we shall focus on solving the following strongly convex quadratic program (QP) problem subject to a generalized simplex constraint:
\begin{equation}
	\label{eq-qp}
	\min_{x\in\mathbb{R}^n}\; q(x):=\frac{1}{2}x^TQx+c^Tx \quad \mathrm{s.t.}\quad e^Tx = b,\; \ell\leq x\leq u, 
\end{equation}
where $Q\in\mathbb{S}_{++}^n$.  
Solving convex QPs efficiently, robustly and accurately has long been one of the most fundamental tasks in the field of optimization thanks to the numerous important application areas, including optimal control~\cite{qin2003survey}, operations research~\cite{hillier2015introduction}, and statistical and machine learning~\cite{james2013introduction,mohri2018foundations}, and its roles in developing more advanced algorithmic frameworks for general optimization tasks~\cite{pang1995globally,nocedal1999numerical,liu2022newton}. 

General-purpose convex QP solvers can address problem \eqref{eq-qp} directly, but they often become impractical as the problem size grows. Early methods, such as active set methods~\cite{wolfe1959simplex} that extend the simplex approach for linear programs~\cite{dantzig2016linear}, are expensive and may require iterations for large-scale problems. Interior point methods~\cite{nesterov1994interior,wright1997primal} offer strong theoretical guarantees and perform well for small to medium sizes but do not scale effectively to high dimensions. Augmented Lagrangian methods (ALMs)\cite{hermans2022qpalm,liang2022qppal} have gained traction for their strong empirical performance, yet repeatedly solving linear systems remains a bottleneck for very large problems. As a result, recent research has emphasized first-order methods, including Frank-Wolfe or conditional gradient methods\cite{frank1956algorithm}, projected gradient methods~\cite{nesterov1983method,beck2009fast}, and operator splitting methods~\cite{stellato2020osqp,o2016conic,li2016schur}. While Frank-Wolfe is appealing for its projection-free nature, its convergence can be slowed by the so-called “zigzagging” phenomenon. Renewed interest has led to variants such as the away-step Frank-Wolfe with linear convergence under strong convexity~\cite{lacoste2015global}, which proves effective for feasible regions given explicitly as convex hulls (e.g., the standard simplex~\cite{liu2022newton}). Projected gradient methods share similar convergence challenges, but combining them with active-set strategies yields more efficient schemes~\cite{hager2023algorithm}. Meanwhile, operator splitting methods often solve only one linear system in the preprocessing phase, yet this cost alone can dominate when problems are large.

Next, we shall present the vertex exchange method and its appealing convergence. The proposed algorithm relies on the duality and optimality associated with problem \eqref{eq-qp}. To see this, we first derive the associated Lagrange dual problem of \eqref{eq-qp} as follows (see e.g., \cite{liang2022qppal}):
\begin{equation}
    \label{eq-qp-dual}
    \max_{y\in\mathbb{R},\; w\in \mathbb{R}^n, \; z\in\mathbb{R}^n}\; by - \frac{1}{2}w^TQw- \mathcal{I}_{\mathcal{C}(\ell, u)}^*(-z)\quad \mathrm{s.t.}\quad -Qw + ye + z = c.
\end{equation}
Consequently, the first-order optimality conditions (also known as the KKT conditions) for problems \eqref{eq-qp} and \eqref{eq-qp-dual} are given as:
\begin{equation}
    \label{eq-kkt-qp}
    e^Tx = b,\quad -Qw + ye+z = c,\quad Qx-Qw = 0,\quad x\in\mathcal{C}(\ell, u),\quad -z\in \mathcal{N}_{\mathcal{C}(\ell, u)}(x).
\end{equation}
Since the Slater condition holds for problem \eqref{eq-qp} under Assumption \ref{assumption-feasible-set}, we see from \cite[Theorem 2.165]{bonnans2013perturbation} that the optimality conditions \eqref{eq-kkt-qp} admit at least one solution. Hence, by using \eqref{eq-normalcone-C} and the fact that $Q$ is nonsingular, we see that the KKT conditions provided in \eqref{eq-kkt-qp} can be reformulated as 
\begin{equation}
    \label{eq-kkt-qp-new}
    \begin{aligned}
        &\; e^Tx = b,\quad x\in\mathcal{C}(\ell,u), \quad -Qx + ye+z = c,\\
        &\; z_i\geq 0,\; \forall\; i\in J_\ell(x),\quad  z_i\leq 0,\;\forall\; i\in J_u(x),\quad z_i = 0,\; \forall\; i \notin J_\ell(x)\cup J_u(x).
    \end{aligned}
\end{equation}
Here, $J_\ell(x)$ and $J_u(x)$ denote the index sets:
\begin{equation}
	\label{eq-J}
	J_\ell(x):=\{i\in[n]\;:\; x_i = \ell_i\},\quad J_u(x):=\{i\in[n]\;:\; x_i = u_i\},\quad \forall x\in \mathbb{R}^n.
\end{equation}

Using the reformulated optimality conditions \eqref{eq-kkt-qp-new} and {being motivated by \cite{bohning1986vertex}}, we are able to provide an equivalent characterization of these conditions. Moreover, when the new condition is not satisfied, it then offers a descent direction for the QP problem \eqref{eq-qp}. These results are summarized in the following lemma, which plays the essential role in developing and analyzing our main algorithm, as we shall see shortly. 

\begin{lemma}
    \label{lemma-qp-descend}
    Suppose that Assumption \ref{assumption-feasible-set} holds. Let $x\in \mathbb{R}^n$ be a feasible point for problem \eqref{eq-qp} and $g:= \nabla q(x) = c + Qx$ be the gradient of $q$ at $x$. Select 
    \[
       s\in \mathrm{argmax}\left\{g_i\;:\; x_i > \ell_i  \right\},\quad t\in \mathrm{argmin}\left\{g_i\;:\; x_i < u_i \right\}.
    \]
    Then, $x$ is an optimal solution of problem \eqref{eq-qp} if and only if $g_s\leq g_t$. 
    
    Define a new point $x^+\in\mathbb{R}^n$ via:
    \[
        x^+ := x + \eta (e_{t} -  e_{s}),\quad \eta\in [0, \min\{x_{s} - \ell_{s}, u_{t} - x_{t}\}].
    \]
    Then, $x^+$ is a feasible solution for problem \eqref{eq-qp}, and if $g_s > g_t$,  $e_{t} - e_{s}$ is a descent direction of the function $q$.
\end{lemma}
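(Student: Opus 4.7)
The plan is to prove the optimality characterization via the KKT conditions \eqref{eq-kkt-qp-new}, and then handle feasibility and the descent property by direct calculation.

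First I would analyze the necessity of $g_s \leq g_t$. Assuming $x$ is optimal, there exist $y\in\mathbb{R}$ and $z\in\mathbb{R}^n$ satisfying \eqref{eq-kkt-qp-new}. Substituting $z = g - ye$ into the sign conditions, I would rewrite them coordinatewise as: $g_i \geq y$ if $x_i = \ell_i$; $g_i \leq y$ if $x_i = u_i$; and $g_i = y$ if $\ell_i < x_i < u_i$. The index set $\{i : x_i > \ell_i\}$ partitions into those with $x_i = u_i$ (where $g_i \leq y$) and those with $\ell_i < x_i < u_i$ (where $g_i = y$), so every index in this set satisfies $g_i \leq y$; taking the max gives $g_s \leq y$. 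Symmetrically, $\{i : x_i < u_i\}$ yields $g_t \geq y$. Hence $g_s \leq y \leq g_t$. Under Assumption \ref{assumption-feasible-set}, the constraint $e^T\ell < b < e^Tu$ combined with $e^Tx = b$ guarantees that both sets $\{i : x_i > \ell_i\}$ and $\{i : x_i < u_i\}$ are nonempty, so $s$ and $t$ are well defined.

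For the sufficiency, assuming $g_s \leq g_t$, I would construct a dual pair $(y,z)$ verifying \eqref{eq-kkt-qp-new}. If there exists an interior index $i_0$ with $\ell_{i_0} < x_{i_0} < u_{i_0}$, then that $i_0$ belongs to both selection sets, so $g_t \leq g_{i_0} \leq g_s \leq g_t$, forcing $g_s = g_t = g_{i_0}$; take $y := g_s$. Otherwise every coordinate of $x$ lies at a bound, and any $y \in [g_s, g_t]$ is admissible. Setting $z := g - ye$ and checking the three sign cases (using that $x_i = \ell_i$ implies $i \in \{j : x_j < u_j\}$ so $g_i \geq g_t \geq y$, and analogously for $x_i = u_i$) yields exactly the conditions in \eqref{eq-kkt-qp-new}. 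Since the Slater condition holds, satisfaction of the KKT conditions is sufficient for optimality, completing the equivalence.

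For the remaining two claims, feasibility of $x^+$ is immediate: $e^T x^+ = e^T x + \eta(1-1) = b$, and by the choice of $\eta$,
\[
    x^+_s = x_s - \eta \geq \ell_s, \quad x^+_t = x_t + \eta \leq u_t,
\]
while all other coordinates are unchanged and $x^+_s \leq x_s \leq u_s$, $x^+_t \geq x_t \geq \ell_t$. (The case $g_s > g_t$ forces $s \neq t$ since otherwise $g_s = g_t$.) Finally, the directional derivative of $q$ at $x$ along $d := e_t - e_s$ is
\[
    \nabla q(x)^T d = g_t - g_s < 0,
\]
so $d$ is a descent direction whenever $g_s > g_t$.

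The only subtle point is the sufficiency direction, where one must correctly identify the admissible range of the multiplier $y$ and handle the degenerate case where interior coordinates exist; everything else is bookkeeping around \eqref{eq-kkt-qp-new} and the definitions of $J_\ell(x)$, $J_u(x)$.
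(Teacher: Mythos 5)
Your proposal is correct and takes essentially the same route as the paper: optimality is characterized through the KKT system \eqref{eq-kkt-qp-new}, with the same case split between the presence or absence of a coordinate strictly between its bounds, the same construction of multipliers $y$ and $z = g - ye$, and the same direct computations for feasibility of $x^+$ and for the descent property $g^T(e_t - e_s) = g_t - g_s < 0$. One cosmetic remark: sufficiency of the KKT conditions follows from convexity alone, so citing Slater at that point is unnecessary (a constraint qualification is what guarantees existence of multipliers in the necessity direction, which here also follows from the affinity of the constraints).
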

\begin{proof}
    See Appendix~\ref{proof-lemma-qp-descend}.
\end{proof}

Now, Lemma \ref{lemma-qp-descend} naturally leads to the main algorithm, as presented in the following Algorithm \ref{alg-qp}. 

\begin{algorithm}[htb!]
    \caption{A Vertex Exchange Method for problem \eqref{eq-qp}.}
    \label{alg-qp}
    \begin{algorithmic}[1]
        \STATE \textbf{Input:} $Q\in\mathbb{S}_{++}^n$, $c\in \mathbb{R}^n$, $ b \in \mathbb{R}$, $\ell, u \in\mathbb{R}^n$ satisfying Assumption \ref{assumption-feasible-set}, and $\tilde x^0\in\mathbb{R}^n$.
        
        \STATE Compute $x^0 = \mathrm{Proj}_{\mathcal{F}}(\tilde x^0)$. 
        
        \STATE Compute $g^0 = \nabla q(x^0) = c + Qx^0$.
        \FOR{$k\geq 0$}
            
            \STATE Select $s^k\in \mathrm{argmax}\left\{g^k_i\;:\; x_i^k > \ell_i   \right\}$ and $t^k \in \mathrm{argmin}\left\{g^k_i\;:\; x_i^k < u_i \right\}$.

            \IF{$g_{s^k}^k \leq  g_{t^k}^k$}
                \STATE \textbf{Output:} $x^k$.
            \ENDIF

            \STATE Set $d^k = e_{t^k} - e_{ s^k}$ and $\eta_{\rm max}^k:=\min\{x_{s^k}^k - \ell_{s^k}, u_{t^k} - x_{t^k}^k\}$.
            
            \STATE Set $x^{k+1} = x^k + \eta^kd^k$ where $\eta_k$ is the optimal solution of 
            \begin{equation}
                \label{eq-alg-step-size}
                \min_{\eta\in\mathbb{R}}\; q(x^k + \eta d^k)\quad \mathrm{s.t.}\quad 0\leq \eta \leq \eta_{\rm max}^k.
            \end{equation}
            
            \STATE Update $g^{k+1} = g^k + \eta^k(Q_{:, t^k} - Q_{:, s^k})$.
        \ENDFOR
    \end{algorithmic}
\end{algorithm}

We note that the termination condition $g_{s^k}^k \leq  g_{t^k}^k$ {holds only theoretically}. In fact, this condition may never be met if the algorithm is executed on a system using floating point arithmetic. Hence, in practice, one can use $|g_{s^k}^k -  g_{t^k}^k| < \texttt{tol}$ or a similar condition for termination, where $\texttt{tol}>0$ is a given small tolerance specified by the user. Additionally, the algorithm requires only a single projection and one matrix-vector multiplication {at the beginning of the algorithm}. Thus, when $Q$ is large-scale and fully dense, the per-iteration cost of the Algorithm \ref{alg-qp} is extremely low, as long as one can access any column of $Q$ efficiently. For the computation of the step size $\eta^k$, we need to solve an optimization problem in one variable subject to a box constraint. Interestingly, this optimization problem admits an analytical solution, which is described in the following lemma.

\begin{lemma}
    \label{lemma-step-size}
    The optimal step size for problem \eqref{eq-alg-step-size} is given by: 
    \[
        \eta^k:=\min\left\{\eta_{\rm max}^k, \frac{g^k_{s^k} - g^k_{t^k}}{Q_{s^k, s^k} + Q_{t^k, t^k} - Q_{s^k, t^k} - Q_{t^k, s^k}}\right\}.
    \]
    Moreover, $\eta^k>0$ before the termination of Algorithm \ref{alg-qp}. 
\end{lemma}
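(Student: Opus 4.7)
The plan is to reduce \eqref{eq-alg-step-size} to a one-dimensional strongly convex quadratic minimization in $\eta$, for which the closed-form minimizer is well-known. First I would substitute $d^k = e_{t^k} - e_{s^k}$ into $q(x^k + \eta d^k)$ and expand, using $\nabla q(x^k) = g^k$, to obtain
\[
    q(x^k + \eta d^k) = q(x^k) + \eta\, (g^k)^T d^k + \tfrac{1}{2}\eta^2\, (d^k)^T Q d^k.
\]
Since $d^k = e_{t^k} - e_{s^k}$, the linear coefficient is $(g^k)^T d^k = g^k_{t^k} - g^k_{s^k}$, and the quadratic coefficient is
\[
    (d^k)^T Q d^k = Q_{s^k,s^k} + Q_{t^k,t^k} - Q_{s^k,t^k} - Q_{t^k,s^k}.
\]

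Next I would argue that $s^k \neq t^k$ before termination. Indeed, if $s^k = t^k$, then trivially $g^k_{s^k} = g^k_{t^k}$, which would have triggered the stopping test in Algorithm \ref{alg-qp}. Hence $d^k \neq 0$, and by the positive definiteness of $Q$ the quadratic coefficient $(d^k)^T Q d^k$ is strictly positive, so the one-dimensional objective is strongly convex in $\eta$. Meanwhile, before termination we have $g^k_{s^k} > g^k_{t^k}$, so the linear coefficient is strictly negative, in agreement with Lemma \ref{lemma-qp-descend} (descent direction). The unconstrained minimizer is therefore
\[
    \eta^*_k = \frac{g^k_{s^k} - g^k_{t^k}}{Q_{s^k,s^k} + Q_{t^k,t^k} - Q_{s^k,t^k} - Q_{t^k,s^k}} > 0.
\]
Projecting this value onto the constraint interval $[0,\eta_{\rm max}^k]$ and noting that $\eta^*_k > 0$ forces the lower bound to be inactive, the optimal step size of \eqref{eq-alg-step-size} reduces to $\min\{\eta_{\rm max}^k, \eta^*_k\}$, which is precisely the formula claimed.

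For the positivity statement, I would verify that $\eta_{\rm max}^k > 0$ by definition of the index selection: the constraint $x_{s^k}^k > \ell_{s^k}$ used to define $s^k$ gives $x_{s^k}^k - \ell_{s^k} > 0$, and $x_{t^k}^k < u_{t^k}$ used to define $t^k$ gives $u_{t^k} - x_{t^k}^k > 0$, so their minimum is strictly positive. Combined with $\eta^*_k > 0$, we obtain $\eta^k > 0$. I do not anticipate any significant obstacle; the only subtlety is ensuring $s^k \neq t^k$ (hence $d^k \neq 0$) so that strong convexity of the one-dimensional subproblem is in force, and this is handled by the termination test itself.
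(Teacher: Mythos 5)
Your proposal is correct and follows essentially the same route as the paper: expand the one-dimensional quadratic $q(x^k+\eta d^k)$, observe that the quadratic coefficient $(d^k)^TQd^k = Q_{s^k,s^k}+Q_{t^k,t^k}-Q_{s^k,t^k}-Q_{t^k,s^k}$ is positive and the linear coefficient $g^k_{t^k}-g^k_{s^k}$ is negative before termination, and conclude the constrained minimizer is the unconstrained one truncated at $\eta^k_{\rm max}$, with positivity of $\eta^k_{\rm max}$ coming from the definitions of $s^k$ and $t^k$. Your explicit remark that $s^k\neq t^k$ (so that $d^k\neq 0$ and positive definiteness applies) is a point the paper leaves implicit, but it is the same argument.
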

\begin{proof}
	See Appendix~\ref{proof-lemma-step-size}.
\end{proof}

Recall that at the $k$-th iteration of the classical block-coordinate descent (BCD) method~\cite{tseng2001convergence}, we first select an index set $\mathcal{I}_k\subset [n]$ and then solve the following optimization problem for computing the search direction:
\begin{equation} \label{eq-eta-bcd}
    \min_{\eta_i, i\in \mathcal{I}_k}\; q(x^k + \sum_{i\in\mathcal{I}_k}\eta_ie_i)\quad \mathrm{s.t.}\quad e^T(x^k + \sum_{i\in\mathcal{I}_k}\eta_ie_i) = b,\; \ell \leq x^k + \sum_{i\in\mathcal{I}_k}\eta_ie_i \leq u.
\end{equation}
Denote the optimal solution for the above optimization problem \eqref{eq-eta-bcd} as $\eta_i^k$, for $i\in \mathcal{I}_k$, we then update the decision variable as 
\[
    x^{k+1} = x^k + \sum_{i\in \mathcal{I}_k}\eta_i^ke_i,\quad k\geq 0.
\]
Note that solving the optimization problem \eqref{eq-eta-bcd} can also be challenging, especially when the number of selected indices, i.e., $|\mathcal{I}_k|$ is large. 
The proposed VEM can be viewed as a greedy version of the BCD method, where we cleverly choose $\mathcal{I}_k = \{s^k, t^k\}$ based on the gradient at the current iterate.
In this way, we are able to show that the optimal step size admits a closed-form expression, as shown in Lemma~\ref{lemma-step-size}. 

The same idea has been applied to solve support vector machine problems in the so called sequential minimal optimization method, as studied in~\cite{chen2024study}. By using this connection, we can state the global convergence and the local linear convergence rate of Algorithm \ref{alg-qp}. Here, we assume without loss of generality that Algorithm \ref{alg-qp} generates an infinite sequence, denoted as $\{x^k\}_{k\geq 0}$. Otherwise, there exists $K$ such that $g_{s^K}^K \leq  g_{t^K}^K$. Then, Lemma \ref{lemma-qp-descend} implies that the output point $x^K$ is the optimal solution for problem \eqref{eq-qp}. 
\begin{theorem}{\cite{chen2024study,list2009svm}}
    \label{thm-convergence}
    Let $\{x^k\}$ be an infinite sequence generated by Algorithm \ref{alg-qp}. Then the whole sequence converges to the unique optimal solution $x^*$ of problem \eqref{eq-qp} with 
    \[
        \max\{[\nabla q(x^*)]_i: x_i^* > \ell_i\} = \min\{[\nabla q(x^*)]_i: x_i^* < u_i\}.
    \]
    Moreover, the sequence $\{x^k\}$ converges to $x^*$ linearly, for $k$ sufficiently large.
\end{theorem}

The linear convergence rate was first established in \cite[Theorem 8]{chen2024study} for support vector machine under the the nondegeneracy condition. It guarantees that when the iterate $x^k$ is sufficiently close to the optimal solution $x^*$, the active bounded constraints are fixed and hence will not be selected for constructing $\mathcal{I}_k = \{s^k, t^k\}$. However, the nondegeneracy condition can be quite restricted and it is generally not verifiable. Later, such a strong condition is removed in \cite{list2009svm}.

\section{Numerical experiments}\label{section-exp}
This section is dedicated to a set of comprehensive numerical studies to evaluate the practical performance of the proposed algorithms compared with existing state-of-the-art counterparts.

\textbf{Termination conditions.} For the semismooth Newton (\texttt{SSN}) method proposed in Algorithm \ref{alg-ssn}, we terminate the algorithm if it computes a point $y\in\mathbb{R}$ such that $|\phi'(y)|\leq \texttt{grad\_tol}$, where $\texttt{grad\_tol} > 0$ is a given tolerance specified by the user. In our experiments in Section \ref{subsection-num-proj}, we always set $\texttt{grad\_tol} = 10^{-12}$. The maximal number of iterations allowed for the \texttt{SSN} method is set to be $50$. For the main Algorithm \ref{alg-qp}, denoted as \texttt{VEM}, we offer several options. Given a tolerance $\texttt{tol} > 0$, one could terminate the algorithm if one of the following conditions is met at the $k$-th iteration:
\begin{itemize}
    \item $|g_{s^k}^k -  g_{t^k}^k|/\max\{1, \|Q\|_F\} \leq \texttt{tol}$.
    \item The relative optimality residue 
    \[
        \frac{\|x^k - \mathrm{Proj}_{\mathcal{F}}(x^k - Qx^k -c)\|}{1+\|x^k\|} \leq \texttt{tol}.
    \]
    \item $\texttt{err}(x^k)\leq \texttt{tol}$ where $\texttt{err}:\mathbb{R}^n\to \mathbb{R}_{+}$ is any user-defined error function.
\end{itemize}
If the projection onto the feasible set $\mathcal{F}$ is time-consuming, then one is suggested to employ the first termination condition. Otherwise, the second condition is more robust and commonly used in the literature as it directly measure the violation of the optimality conditions. Readers are referred to \eqref{eq-alg-pn-inexactness} for one particular example that employs the third condition. If not stated explicitly, we use the first termination condition as the default option with $\texttt{tol} = 10^{-12}$. Moreover, The maximum number of iterations allowed for the \texttt{VEM} method is set to be $10^{6}$.

\textbf{Computational platform.} {We performed the numerical experiments in Sections~\ref{subsection-num-proj} and \ref{subsection-random-qp} using MATLAB (R2023b) on a machine with 16GB of RAM and a 2.2GHz Quad-Core Intel Core i7 processor. To evaluate performance on larger-scale problems, we ran the experiments in Section~\ref{subsection-dopt} using MATLAB (R2023b) on a machine equipped with 64GB of RAM and a 3.4GHz 13th Generation Intel Core i7-13700K processor.}

\subsection{Projection onto the generalized simplex}\label{subsection-num-proj}
We first verify the practical performance of the \texttt{SSN} method, which is needed to provide a feasible point for the main Algorithm \ref{alg-qp}. To this end, we first describe how to generate  test problems of various sizes. Then, we introduce the state-of-the-art baseline solvers that we would like to compare with. Finally, we present the numerical results and provide detailed comparisons. 

\textbf{Testing instances.} We generate a set of large-scale random projection problems onto the feasible set $\mathcal{F}$ as follows (here, $n\in\mathbb{N}$ denotes the vector size) using {\sc Matlab}:

\begin{verbatim}
    rng("default"); % for reproducitivity
    l = max(0, randn(n,1)); 
    u = l + rand(n,1); 
    b = sum(l + u)/2;
    x0 = rand(n,1); % the point to project
\end{verbatim}
Here, $\texttt{x0}$ represents the point to be projected. In our experiments, we test $n\in \{10^{6}, 2\times 10^{6}, \dots, 10^{7}\}$.

\textbf{Baseline solvers.} The baseline solvers we choose to compare Algorithm \ref{alg-ssn} with are the commercial solver \texttt{GUROBI} (v11.0.1) \footnote{Solver website: \url{https://www.gurobi.com/}} with an academic license and a dual active set method that exploits sparsity of the problem, namely \texttt{PPROJ} \cite{hager2016projection} \footnote{Available at: \url{https://people.clas.ufl.edu/hager/software/}}. The \texttt{GUROBI} solver is currently one of the most efficient and robust solvers for solving general linear and quadratic programming problems, which is able to provide highly accurate solutions \footnote{See \url{https://plato.asu.edu/bench.html} for related benchmark.}. On the other hand, \texttt{PPROJ} makes special effort in identifying the solution sparsity to achieve superior practical performance, as indicated in \cite{hager2016projection}. The settings for \texttt{GUROBI} and \texttt{PPROJ} are summarized in the following Table \ref{table-settings-proj-gen-simplex}. Particularly, only the barrier method in \texttt{GUROBI} (which offers the best performance among the supported algorithms in \texttt{GUROBI}) is tested.

\begin{table}[htb!]
    \centering
    \begin{tabular}{c|c} \hline 
       Solver  & Settings\\ \hline 
       \texttt{GUROBI}  & \makecell{\texttt{Method = 2; BarConvTol = 1e-12;} \\ \texttt{FeasibilityTol = 1e-9; OptimalityTol = 1e-9;}}\\ \hline 
       \texttt{PPROJ}  & \texttt{grad\_tol = 1e-12;}\\ \hline 
    \end{tabular}
    \caption{Settings for the baseline solvers.}
    \label{table-settings-proj-gen-simplex}
\end{table}

The computational results are presented in Table \ref{table-proj-gen-simplex} in which we report the linear constraint violations and the total computational times (in seconds) for the three algorithms. Note that all solvers are able to return a point that belongs to the set $\mathcal{C}(\ell, u)$, based on our observation. Hence, for simplicity, we refrain from reporting the box constraint violations.

\begin{table}[htb!]
    \centering
    \begin{tabular}{c|ccc|ccc} \hline 
& \multicolumn{3}{c|}{$\displaystyle \| e^{T} x\ -b\| $} & \multicolumn{3}{c}{\texttt{Time}} \\ \hline 
 \texttt{Sizes} & \texttt{PPROJ} & \texttt{GUROBI} & \texttt{SSN} & \texttt{PPROJ} & \texttt{GUROBI} & \texttt{SSN} \\ \hline
\num{1e+6} & \num{1e-8} & \num{3e-8} & $<\texttt{eps}$ & 0.32 & 4.93  & 0.03 \\ 
\num{2e+6} & \num{3e-8} & \num{2e-8} & $<\texttt{eps}$ & 0.68 & 9.28  & 0.06 \\
\num{3e+6} & \num{2e-7} & \num{6e-8} & $<\texttt{eps}$ & 1.04 & 14.31 & 0.09 \\
\num{4e+6} & \num{2e-8} & \num{1e-6} & $<\texttt{eps}$ & 1.46 & 20.25 & 0.13 \\
\num{5e+6} & \num{2e-7} & \num{2e-9} & $<\texttt{eps}$ & 1.88 & 25.17 & 0.14 \\
\num{6e+6} & \num{2e-7} & \num{2e-8} & $<\texttt{eps}$ & 2.26 & 30.86 & 0.17 \\
\num{7e+6} & \num{5e-7} & \num{3e-7} & $<\texttt{eps}$ & 2.73 & 35.71 & 0.25 \\
\num{8e+6} & \num{3e-7} & \num{3e-7} & $<\texttt{eps}$ & 3.12 & 40.55 & 0.21 \\
\num{9e+6} & \num{5e-8} & \num{1e-6} & $<\texttt{eps}$ & 3.48 & 44.06 & 0.33 \\
\num{1e+7} & \num{3e-7} & \num{5e-7} & $<\texttt{eps}$ & 3.91 & 50.88 & 0.32 \\ 
\hline 
\end{tabular}
\captionof{table}{Linear constraint violations. Here the \texttt{eps} means the machine epsilon. For our computational platform, $\texttt{eps}:=2.2204\times 10^{-16}$.}
\label{table-proj-gen-simplex}
\end{table}

Obviously, the \texttt{SSN} method outperforms both \texttt{GUROBI} and $\texttt{PPROJ}$ in terms of both computational time and linear constraint violations. In fact, our computational results show that the \texttt{SSN} method is able to compute a projection point nearly exactly using much less computational time. $\texttt{PPROJ}$, which exploits the solution sparsity, also shows promising numerical performance and is able to generate solutions as accurate as those of \texttt{GUROBI}. \texttt{GUROBI}, on the other hand, is less efficiently due to the need of solving a linear system at each iteration, even though the linear system is highly sparse. We conclude that the \texttt{SSN} method for computing the projection onto the generalized simplex is highly efficient and accurate due to the fast convergence rate and can be used as a fundamental toolbox for other algorithms for which the projection is needed.

\subsection{QPs with generalized simplex constraints} \label{subsection-random-qp}

We next evaluate the numerical performance of the Algorithm \ref{alg-qp} via solving a set of artificial QP problems with known optimal solution and comparing it with current state-of-the-art first-order methods. To this end, we first describe how to generate the testing QP instances and then mention the baseline solvers to be compared with. Finally, we present the detailed computational results with some discussions. 

\textbf{Testing instances.} Given a matrix size $n \in \mathbb{N}$, we generate a set of symmetric positive definite matrices $Q\in\mathbb{S}_{++}^n$ with different condition numbers, denoted in short by \texttt{cond}. In particular, let $Q_0\in\mathbb{R}^{n\times n}$ be a random square matrix, we first compute the QR factorization of $Q_0$ to get an orthogonal matrix $U\in\mathbb{R}^{n\times n}$. Moreover, for a given \texttt{cond}, we {generate a random vector $d\in\mathbb{R}^n$ of integers from the closed interval $[1, \texttt{cond}]$ whose minimum and maximum entries are then scaled to 1 and $\texttt{cond}$, respectively. Then, the matrix $Q$ is set as $Q :=  \widetilde{Q} / \|\widetilde{Q}\|_F$ with $\widetilde{Q}:=U\mathrm{Diag}(d)U^T$, which is automatically positive definite with a condition number \texttt{cond}.} The optimal solution $\bar{x}\in\mathbb{R}^n$ is a random vector sampled uniformly from the interval $[-1, 1]$. Then, we set $b := e^T\bar{x}$. Let $\texttt{ratio}\in (0,1)$ be given, we can define two index sets $J_\ell:=\{i\in[n]\;:\; \bar{x}_i \leq -\texttt{ratio}\}$ and $J_u:=\{i\in[n]\;:\; \bar{x}_i \geq \texttt{ratio}\}$. We now construct the lower bound and upper bound vectors $\ell$ and $u$ as follows
\[
    \ell_i = \left\{
        \begin{array}{ll}
            \bar{x}_i, & i\in J_\ell, \\
            -1 & \textrm{otherwise},
        \end{array}
    \right.\quad  u_i = \left\{
        \begin{array}{ll}
            \bar{x}_i, & i\in J_u, \\
            1 & \textrm{otherwise},
        \end{array}
    \right.\quad \forall\; i\in[n].
\]
Given the generated $Q$, $\bar{x}$, $b$, $\ell$ and $u$, we next show how to generate $c\in\mathbb{R}^n$ by utilizing the first-order optimality conditions given in \eqref{eq-kkt-qp-new}. We propose to set $\bar{y}$ as a random number drawn from the standard normal distribution $\mathcal{N}(0,1)$. Furthermore, we construct a vector $\bar{z}\in\mathbb{R}^n$ as follows:
\[
    \bar{z}_i =\left\{
    \begin{array}{ll}
        \texttt{rand}(1), & i\in J_\ell, \\
         0, & i\notin J_\ell\cup J_u, \\
        -\texttt{rand}(1), & i\in J_\ell,
    \end{array} 
    \right.\quad \forall\; i\in [n],
\]
where $\texttt{rand}$ denotes the random number generator in {\sc Matlab} that samples a random number from the uniform distribution on the interval $[0,1]$. Then, the vector $c$ is constructed as {$c := -Q\bar{x}+\bar{y}e + \bar{z}$.} By construction, $(\bar{x}, \bar{y}, \bar{z})$ becomes a solution for the first-order optimality conditions \eqref{eq-kkt-qp-new}. In our experiment, we choose $n\in \{10^4, 2\times 10^4\}$, ${\texttt{ratio}\in \{0.2, 0.4, 0.6, 0.8\}}$, and $\texttt{cond}\in\{10^2, 10^4, 10^6, 10^8\}$. Consequently, the total number of test instances is $2\times 4 \times 4 = 32$. 

\textbf{Baseline solvers.} Note that the matrix $Q\in\mathbb{S}_{++}^n$ in our experiment is of large-scale and fully dense. Hence, an algorithm that requires solving linear systems would be inefficient. In view of this, we only compare the proposed algorithm with state-of-the-art first-order methods that do not need to solve linear systems \footnote{{We also tried \texttt{OSQP} \cite{stellato2020osqp}, {a specialized ADMM-based first-order method}, which presents as one of the state-of-the-art first-order methods for solving general convex quadratic programming problems. {Our numerical experience revealed that OSQP requires the factorization of a large symmetric positive definite matrix, with the time spent on this factorization dominating the overall computational cost, rendering it non-competitive for solving the QPs considered in this paper.} For simplicity, we do not report the computational results obtained from \texttt{OSQP}.}}. These methods include the following: 
\begin{itemize}
    \item {The classical projected gradient method \cite{goldstein1964convex} (denoted as \texttt{PG}) is a fundamental tool for many other advanced methods. As an representative example, the accelerated projected gradient method \cite{nesterov1983method,beck2009fast} (denoted as \texttt{FISTA}), which built upon \texttt{PG}, is a hugely popular method in the optimization community in the past decades. In our experiment, we use the implementations of \texttt{PG} and \texttt{FISTA} provided by \cite{beck2019fom}~\footnote{Available at: \url{https://www.tau.ac.il/~becka/home}.}. The termination tolerance for \texttt{PG} and \texttt{FISTA} is set to be $10^{-8}$, and the time limit is set as $300$ seconds. To allow both methods to gain their best performance, we use the economical versions in which function values are not computed. Moreover, the projection onto the feasible set at each iteration is computed using Algorithm \ref{alg-ssn}, and the exact Lipschitz constant with respect to $\nabla q(\cdot)$ needed for determining the step-size is given explicitly by the generated vector $d$. Note that in general, the estimation of the Lipschitz constant can also be time-consuming, since it involves computing the extreme eigenvalue of $Q$. Moreover, one may also apply a certain backtracking scheme to estimate the Lipschitz constant. However, this requires computing the objective value at each iteration, which could also be time-consuming, especially when $Q$ is fully dense and large-scale.}
    \item When dealing with polyhedral constraints, it is desirable to employ the active-set strategy to greatly improve the efficiency and reduce the dimensionality of the problem. In particular, we consider the solver called \texttt{PASA} developed in the package \texttt{SuiteOPT} \cite{hager2023algorithm}~\footnote{Available at: \url{https://people.clas.ufl.edu/hager/software/}.}, which implements a projected gradient-based polyhedral active set method that is shown to have excellent practical numerical performance. In our experiments, we adapt the default settings of \texttt{PASA} except that we set $\text{grad\_tol} = 10^{-12}$. Note that the projection onto the feasible set $\mathcal{F}$ in \texttt{PASA} is computed by the integrated routine \texttt{PPROJ}.
    \item Though computing a single projection onto the feasible set $\mathcal{F}$ can be done efficiently by using Algorithm \ref{alg-ssn}, projection-based algorithms, including \texttt{FISTA} and \texttt{PASA}, may require numerous iterations to converge. In this case, the cost taken for computing the projection can be a limiting factor on the practical efficiency. In view of this, we consider comparing with the standard projection-free Frank-Wolfe algorithm \cite{frank1956algorithm}, denoted as \texttt{FW}, which has gained tremendous popularity lately due to its scalability and appealing convergence properties. The step-size for \texttt{FW} at the $k$-th iteration is set as $1/(k+2)$, for $k\geq 0$ \footnote{Recent works (e.g., \cite{lacoste2015global}) have shown that when the feasible set of a strongly convex constrained optimization is the convex hull of a set of given points, the Away-Steps Frank-Wolfe algorithm admits a linear convergence rate which leads to promising numerical performance. However, for the feasible set $\mathcal{F}$ considered in the present paper, it is challenging to find a set of points, say $\mathcal{A}:=\{a_1,\dots, a_p\}$ such that the convex hull of $\mathcal{A}$ is $\mathcal{F}$, i.e., $\mathrm{conv}\{a_1,\dots, a_p\} = \mathcal{F}$. This explains why we only consider the standard Frank-Wolfe algorithm.}. We terminate the algorithm when $\|x^{k+1} - x^k\| \leq 10^{-3}$ or the computational time exceeds a 300-second time limit. For the choice of termination tolerance, we tried some smaller values, but the computational time will increase dramatically without significantly improving the quality of the solutions. {Finally, we mention that the linear program to be solved at each iteration of \texttt{FW} is computed efficiently by the simplex method implemented in \texttt{Gurobi}.} 
\end{itemize} 

{For all the solvers, we use the zeros vector in $\mathbb{R}^n$ as the common initial point.} The computational results for $n = 10^4$ and $n = 2\times 10^4$ are presented in Table \ref{table-qp-1e4} and Table \ref{table-qp-2e4}, respectively. In both tables, we report the relative error with respect to the known optimal solution $\bar{x}$, i.e., $\texttt{relerr}:=\|x - \bar{x}\| / (1+\|\bar{x}\|)$, where $x\in\mathbb{R}^n$ is the output of a method. Moreover, the total computational time (in seconds) taken by each solver is also recorded.

\begin{table}[htb!]
\centering 
\begin{tabular}{c|ccccc|ccccc} \hline 
 & \multicolumn{5}{c|}{\texttt{cond} = \num{1e2}} & \multicolumn{5}{c}{\texttt{cond} = \num{1e4}} \\   \hline
\texttt{ratio} & \texttt{PG} & \texttt{FISTA} & \texttt{PASA} & \texttt{FW} & \texttt{VEM} & \texttt{PG} & \texttt{FISTA} & \texttt{PASA} & \texttt{FW} & \texttt{VEM} \\ \hline 
\multirow{2}{*}{0.2} & \num{5e-10} & \num{3e-10} & \num{1e-11} & \num{2e-04} & \num{6e-12} & \num{4e-10} & \num{4e-10} & \num{3e-12} & \num{2e-04} & \num{3e-12} \\
 & 2.5 & 3.7 & 2.0 & 300.0 & 1.1 & 2.6 & 3.7 & 2.2 & 300.1 & 1.0 \\
 \multirow{2}{*}{0.4} & \num{9e-10} & \num{4e-10} & \num{4e-11} & \num{4e-04} & \num{9e-11} & \num{9e-10} & \num{4e-10} & \num{3e-11} & \num{4e-04} & \num{1e-10} \\
 & 4.4 & 6.3 & 3.5 & 300.0 & 1.8 & 4.6 & 6.5 & 3.3 & 300.1 & 1.9 \\
 \multirow{2}{*}{0.6} & \num{2e-09} & \num{5e-10} & \num{6e-11} & \num{5e-04} & \num{2e-10} & \num{2e-09} & \num{6e-10} & \num{8e-11} & \num{5e-04} & \num{2e-10} \\
 & 7.8 & 11.0 & 4.9 & 300.1 & 3.9 & 8.1 & 11.3 & 4.6 & 300.0 & 4.1 \\
 \multirow{2}{*}{0.8} & \num{4e-09} & \num{1e-09} & \num{1e-10} & \num{9e-04} & \num{8e-10} & \num{5e-09} & \num{1e-09} & \num{2e-10} & \num{1e-03} & \num{9e-10} \\
 & 15.8 & 21.1 & 8.2 & 300.0 & 10.6 & 19.6 & 26.2 & 8.4 & 300.0 & 11.7 \\
  \hline \hline 
 & \multicolumn{5}{c|}{\texttt{cond} = \num{1e6}} & \multicolumn{5}{c}{\texttt{cond} = \num{1e8}} \\ \hline 
\texttt{ratio} & \texttt{PG} & \texttt{FISTA} & \texttt{PASA} & \texttt{FW} & \texttt{VEM} & \texttt{PG} & \texttt{FISTA} & \texttt{PASA} & \texttt{FW} & \texttt{VEM} \\ \hline 
 \multirow{2}{*}{0.2} & \num{4e-10} & \num{4e-10} & \num{1e-11} & \num{2e-04} & \num{2e-11} & \num{4e-10} & \num{4e-10} & \num{2e-11} & \num{2e-04} & \num{7e-12} \\
 & 2.6 & 3.7 & 2.0 & 300.0 & 1.0 & 2.7 & 3.9 & 2.2 & 300.1 & 1.0 \\
 \multirow{2}{*}{0.4} & \num{7e-10} & \num{5e-10} & \num{4e-11} & \num{4e-04} & \num{9e-11} & \num{9e-10} & \num{3e-10} & \num{3e-11} & \num{4e-04} & \num{1e-10} \\
 & 4.7 & 6.5 & 2.9 & 300.0 & 1.9 & 4.2 & 6.0 & 2.9 & 300.1 & 1.8 \\
 \multirow{2}{*}{0.6} & \num{2e-09} & \num{5e-10} & \num{8e-11} & \num{5e-04} & \num{3e-10} & \num{2e-09} & \num{5e-10} & \num{4e-11} & \num{6e-04} & \num{3e-10} \\
 & 8.0 & 11.1 & 4.9 & 300.0 & 3.8 & 7.8 & 10.8 & 5.1 & 300.0 & 3.9 \\
 \multirow{2}{*}{0.8} & \num{5e-09} & \num{1e-09} & \num{2e-10} & \num{1e-03} & \num{9e-10} & \num{5e-09} & \num{8e-10} & \num{2e-10} & \num{1e-03} & \num{9e-10} \\
 & 21.0 & 27.9 & 8.6 & 300.0 & 12.0 & 19.3 & 25.9 & 9.0 & 300.0 & 11.7 \\
  \hline  
\end{tabular}
\caption{Computational results for $n = 10^4$. For each $\texttt{ratio}\in\{0.2, 0.4, 0.6, 0.8\}$, the first row represents the relative error with respect to the true solution, and the second row records the computational time taken by each solver.}
\label{table-qp-1e4}
\end{table}

\begin{table}[htb!]
\centering 
\begin{tabular}{c|ccccc|ccccc} \hline 
 & \multicolumn{5}{c|}{\texttt{cond} = \num{1e2}} & \multicolumn{5}{c}{\texttt{cond} = \num{1e4}} \\   \hline
\texttt{ratio} & \texttt{PG} & \texttt{FISTA} & \texttt{PASA} & \texttt{FW} & \texttt{VEM} & \texttt{PG} & \texttt{FISTA} & \texttt{PASA} & \texttt{FW} & \texttt{VEM} \\ \hline 
 \multirow{2}{*}{0.2} & \num{3e-10} & \num{3e-10} & \num{1e-13} & \num{7e-04} & \num{5e-11} & \num{3e-10} & \num{2e-10} & \num{4e-11} & \num{7e-04} & \num{6e-11} \\
 & 10.4 & 14.8 & 20.3 & 300.1 & 3.1 & 10.4 & 16.0 & 18.4 & 300.2 & 3.1 \\
 \multirow{2}{*}{0.4} & \num{6e-10} & \num{3e-10} & \num{4e-11} & \num{1e-03} & \num{1e-10} & \num{6e-10} & \num{2e-10} & \num{4e-11} & \num{1e-03} & \num{2e-10} \\
 & 16.4 & 23.4 & 23.3 & 300.1 & 5.7 & 16.9 & 24.4 & 23.3 & 300.1 & 5.9 \\
 \multirow{2}{*}{0.6} & \num{1e-09} & \num{5e-10} & \num{1e-10} & \num{2e-03} & \num{4e-10} & \num{1e-09} & \num{4e-10} & \num{9e-11} & \num{2e-03} & \num{4e-10} \\
 & 30.1 & 41.9 & 29.7 & 300.2 & 12.5 & 31.8 & 44.8 & 20.4 & 300.2 & 12.8 \\
 \multirow{2}{*}{0.8} & \num{3e-09} & \num{5e-10} & \num{2e-10} & \num{3e-03} & \num{1e-09} & \num{3e-09} & \num{8e-10} & \num{2e-10} & \num{3e-03} & \num{1e-09} \\
 & 64.2 & 87.9 & 44.5 & 300.0 & 34.1 & 76.1 & 101.8 & 32.9 & 300.1 & 38.7 \\
  \hline \hline 
 & \multicolumn{5}{c|}{\texttt{cond} = \num{1e6}} & \multicolumn{5}{c}{\texttt{cond} = \num{1e8}} \\ \hline 
\texttt{ratio} & \texttt{PG} & \texttt{FISTA} & \texttt{PASA} & \texttt{FW} & \texttt{VEM} & \texttt{PG} & \texttt{FISTA} & \texttt{PASA} & \texttt{FW} & \texttt{VEM} \\ \hline 
 \multirow{2}{*}{0.2} & \num{3e-10} & \num{3e-10} & \num{1e-13} & \num{7e-04} & \num{5e-11} & \num{3e-10} & \num{2e-10} & \num{4e-11} & \num{7e-04} & \num{6e-11} \\
 & 10.4 & 14.8 & 20.3 & 300.1 & 3.1 & 10.4 & 16.0 & 18.4 & 300.2 & 3.1 \\
 \multirow{2}{*}{0.4} & \num{6e-10} & \num{3e-10} & \num{4e-11} & \num{1e-03} & \num{1e-10} & \num{6e-10} & \num{2e-10} & \num{4e-11} & \num{1e-03} & \num{2e-10} \\
 & 16.4 & 23.4 & 23.3 & 300.1 & 5.7 & 16.9 & 24.4 & 23.3 & 300.1 & 5.9 \\
 \multirow{2}{*}{0.6} & \num{1e-09} & \num{5e-10} & \num{1e-10} & \num{2e-03} & \num{4e-10} & \num{1e-09} & \num{4e-10} & \num{9e-11} & \num{2e-03} & \num{4e-10} \\
 & 30.1 & 41.9 & 29.7 & 300.2 & 12.5 & 31.8 & 44.8 & 20.4 & 300.2 & 12.8 \\
 \multirow{2}{*}{0.8} & \num{3e-09} & \num{5e-10} & \num{2e-10} & \num{3e-03} & \num{1e-09} & \num{3e-09} & \num{8e-10} & \num{2e-10} & \num{3e-03} & \num{1e-09} \\
 & 64.2 & 87.9 & 44.5 & 300.0 & 34.1 & 76.1 & 101.8 & 32.9 & 300.1 & 38.7 \\
  \hline  
\end{tabular}
\caption{Computational results for $n = 2\times 10^4$. For each $\texttt{ratio}\in\{0.2, 0.4, 0.6, 0.8\}$, the first row represents the relative errors with respect to the true solution, and the second row records the computational time taken by each solver.}
\label{table-qp-2e4}
\end{table}


We observe from the computational results that \texttt{PG}, \texttt{FISTA}, \texttt{PASA} and \texttt{VEM} share a similar performance in terms of the accuracy of the returned solutions. This suggests that all the four methods are robust for computing high quality solutions with respect to the number of active constraints and the condition number of $Q$. On the other hand, \texttt{FW} shows poor performance in terms of the quality of the solution due to the slow convergence speed. This is further reflected by the recorded computational time. Indeed, \texttt{FW} consumes much more computational time than those of the remaining methods and always reaches the time limit of 300 seconds. We note here that the slow convergence of \texttt{FW} is mainly due to its zigzagging phenomenon--a well-known phenomenon in the literature. We also observe that by exploiting the problem structure via identifying the active-set in the projected gradient-based methods, the efficiency can be greatly improved. This explains why \texttt{PASA} is more efficient than \texttt{PG} and \texttt{FISTA}. For the comparison between \texttt{PG} and \texttt{FISTA}, we observe that \texttt{PG} is slightly better than \texttt{FISTA} in terms of computational efficiency. This is mainly due to the fact \texttt{PG} typically converges faster than \texttt{FISTA} at the early stage given the same initial condition. In our numerical experiments, we observe that both methods only require a few iterations to converge. For \texttt{VEM}, we see that it is highly efficient when $\texttt{ratio}$ is small, say $\texttt{ratio} \leq 0.6$, and is less efficient when $\texttt{ratio}$ is larger (but still comparable to \texttt{PASA}). This suggests that \texttt{VEM} is highly suitable for problems having optimal solutions with many active constraints. 

\subsection{D-optimal experimental design}\label{subsection-dopt} 

In this subsection, we consider solving the D-optimal experimental design that is a methodology commonly used in statistics and engineering to optimize the efficiency of experimental designs:
\begin{equation}
    \label{eq-dopt-design}
    \min_{x\in \mathbb{R}^n}\; f(x):= -\log\det(A\mathrm{Diag}(x)A^T)\quad \mathrm{s.t.}\quad e^Tx = 1, \; 0\leq x\leq 1,
\end{equation}
where $A = \begin{bmatrix}
    a_1 & \dots & a_n
\end{bmatrix} \in \mathbb{R}^{p\times n}$ represents the features (characterized by vectors in $\mathbb{R}^p$) of $n$ experiments. Note that the dual problem of \eqref{eq-dopt-design} can be interpreted as the problem of finding the ellipsoid with the minimum volume that covers a given set of points $\{a_1,\dots, a_n\}$, which is well known as the minimum-volume enclosing ellipsoid problem \cite{damla2008linear}. Developing efficient algorithmic frameworks for solving problem \eqref{eq-dopt-design} and/or its dual problem has been actively researched for decades; see, e.g., \cite{silvey1978algorithm,fedorov2000design,kumar2005minimum,pukelsheim2006optimal,damla2008linear,lu2013computing,liu2022newton,zhao2023analysis} and references therein. 

The gradient and the Hessian of $f$ are given as (see, e.g., \cite{lu2018relatively}):
\[
    \begin{aligned}
        \nabla f(x) = &\; -\mathrm{diag}\left(A^T(A\mathrm{Diag}(x)A^T)^{-1}A\right), \\
        \nabla^2 f(x) = &\; \left(A^T(A\mathrm{Diag}(x)A^T)^{-1}A\right) \circ \left(A^T(A\mathrm{Diag}(x)A^T)^{-1}A\right) ,
    \end{aligned}
\]
for any $x\in\mathrm{dom}(f)$. Here, ``$\circ$'' denotes the Hadamard product. Moreover, it is clear that the function $f$ is self-concordant \cite{sun2019generalized}. Clearly, evaluating the gradient and the Hessian matrix of $f(\cdot)$ can be time-consuming, especially when $n$ and $p$ are large. Moreover, an algorithm that requires solving linear systems at each iteration is not efficient for solving \eqref{eq-dopt-design}. Indeed, \cite{liu2022newton} recently demonstrates that by developing algorithmic frameworks that reduce the number of gradient and Hessian evaluations without requiring solving linear systems is a highly efficient approach for solving \eqref{eq-dopt-design}. In particular,  \cite{liu2022newton} applies the projected Newton method (\texttt{FWPN}) whose subproblem (a strongly convex QP problem with the standard simplex constraint) is solved by the Away-Steps Frank-Wolfe (\texttt{ASFW}) method \cite{lacoste2015global} that admits analytical expressions for calculating the step sizes  \cite{khachiyan1996rounding} \footnote{The implementation of the algorithm is available at: \url{https://github.com/unc-optimization/FWPN}.}. \texttt{ASFW} is highly efficient for solving strongly convex optimization problems with the standard simplex constraint, since it has a global linear convergence guarantee. 

In our experiments, we compare the practical performance of \texttt{NewVEM} with \texttt{FWPN}. The stopping condition for both methods is the same and specified by \eqref{eq-alg-pn-inexactness}.  We report the total computational time (in seconds), denoted as \texttt{TTime}, taken by the Algorithm \ref{alg-pn} with the QP subproblems being solved by \texttt{ASFW} and \texttt{VEM}, respectively. Furthermore, we also report the computational time taken for solving the QP subproblems (in seconds), denoted as \texttt{QPTime}, by the two methods. Lastly, the terminating $\lambda^k$ for each testing instance is also reported. In our experiment, we terminate the Algorithm \ref{alg-pn} when $\lambda^k \leq 10^{-3}$, as also used in \cite{liu2022newton}.

{
We first consider randomly generated data. Specifically, the testing data set is generated in exactly the same way as \cite{damla2008linear,liu2022newton}, i.e., the data points $\{a_1,\dots, a_n\}\subseteq \mathbb{R}^p$ are generated using an independent multinomial Gaussian distribution. The feasible initial point for Algorithm \ref{alg-pn} is chosen as $x^0 = \frac{1}{n}e$. Note that $p$ should be less than $n$ in order to ensure that the problem \eqref{eq-dopt-design} is well-defined. Table \ref{table-dopt-design} presents the computational results with $n\in\{10^4, 2\times 10^4,\dots, 5\times 10^4\}$ and $p = \frac{1}{10}n$. We note that in practical applications, $p \ll n$. Next we consider real data sets that are commonly used in the literature. Particularly, we consider the following three design spaces:
\begin{align*}
    \chi_1(n):= &\; \left\{a_i = (\exp(-\alpha_i), \alpha_i\exp(-\alpha_i), \exp(-2\alpha_i), \alpha_i\exp(-2\alpha_i)^T, i\in[n] \right\}, \\
    \chi_2(n):= &\; \left\{a_i = (1, \alpha_i, \alpha_i^2, \alpha_i^3)^T, i\in[n] \right\}, \\
    \chi_3(n):= &\; \left\{a_i = (\beta_i, \beta_i^2, \sin(2\pi\beta_i), \cos(2\pi\beta_i))^T, i\in[n] \right\}, 
\end{align*}
where $\alpha_i = \frac{3i}{n}$, and $\beta_i = \frac{i}{n}$, for $i\in[n]$. The space $\chi_1(n)$ is used to represent a compartmental model studied in \cite{atkinson1993optimum}, the space $\chi_2(n)$ refers to a polynomial regression model considered in \cite{yu2011d}, and the space $\chi_3(n)$ stands for the quadratic/trigonometric model in \cite{wynn1972results}. Tables \ref{table-dopt-design-1}--\ref{table-dopt-design-3} presents the computational results with $n\in\{10^4, 2\times 10^4,\dots, 5\times 10^4\}$ for each of the three design spaces. Note in these cases, $p = 4$. 
}

\begin{table}[htb!]
\centering
{
\begin{tabular}{c|ccc|ccc}\hline 
   \texttt{QP Solver}        & \multicolumn{3}{c|}{\texttt{FWPN}}            & \multicolumn{3}{c}{\texttt{NewVEM}}                               \\ \hline
$n$       & \texttt{TTime} & \texttt{QPTime} & $\lambda^k $                        & \texttt{TTime} & \texttt{QPTime} & $\lambda^k $                        \\ \hline 
 \num{1e+04} &  24.76 &   4.40 & \num{2.5e-04} &  13.05 &   2.14 & \num{5.7e-04} \\
 \num{2e+04} & 115.16 &  23.05 & \num{3.1e-04} &  34.13 &   2.32 & \num{7.7e-04} \\
 \num{3e+04} & 201.41 &  31.28 & \num{5.0e-04} &  93.62 &   6.51 & \num{4.7e-04} \\
 \num{4e+04} & 345.61 &  47.26 & \num{1.7e-06} & 125.43 &   7.44 & \num{8.8e-04} \\
 \num{5e+04} & 438.34 &  36.32 & \num{2.7e-05} & 195.45 &  11.19 & \num{6.3e-04} \\
\hline
\end{tabular}}
\caption{Computational results on D-optimal experiment design with randomly generated data.}
\label{table-dopt-design}
\end{table}

\begin{table}[htb!]
\centering
{
\begin{tabular}{c|ccc|ccc}\hline 
   \texttt{QP Solver}        & \multicolumn{3}{c|}{\texttt{FWPN}}            & \multicolumn{3}{c}{\texttt{NewVEM}}                               \\ \hline
$n$       & \texttt{TTime} & \texttt{QPTime} & $\lambda^k $                        & \texttt{TTime} & \texttt{QPTime} & $\lambda^k $                        \\ \hline 
 \num{1e+04} &  24.76 &   4.40 & \num{2.5e-04} &  13.05 &   2.14 & \num{5.7e-04} \\
 \num{2e+04} & 115.16 &  23.05 & \num{3.1e-04} &  34.13 &   2.32 & \num{7.7e-04} \\
 \num{3e+04} & 201.41 &  31.28 & \num{5.0e-04} &  93.62 &   6.51 & \num{4.7e-04} \\
 \num{4e+04} & 345.61 &  47.26 & \num{1.7e-06} & 125.43 &   7.44 & \num{8.8e-04} \\
 \num{5e+04} & 438.34 &  36.32 & \num{2.7e-05} & 195.45 &  11.19 & \num{6.3e-04} \\
\hline
\end{tabular}}
\caption{Computational results on D-optimal experiment design with design space $\chi_1(n)$.}
\label{table-dopt-design-1}
\end{table}

\begin{table}[htb!]
\centering
{
\begin{tabular}{c|ccc|ccc}\hline 
   \texttt{QP Solver}        & \multicolumn{3}{c|}{\texttt{FWPN}}            & \multicolumn{3}{c}{\texttt{NewVEM}}                               \\ \hline
$n$       & \texttt{TTime} & \texttt{QPTime} & $\lambda^k $                        & \texttt{TTime} & \texttt{QPTime} & $\lambda^k $                        \\ \hline 
 \num{1e+04} &  19.25 &   3.17 & \num{3.3e-04} &  10.76 &   2.01 & \num{4.1e-04} \\
 \num{2e+04} &  58.69 &  10.45 & \num{4.6e-04} &  36.96 &   3.58 & \num{5.8e-06} \\
 \num{3e+04} & 138.57 &  22.71 & \num{9.6e-04} &  78.81 &   6.38 & \num{6.5e-06} \\
 \num{4e+04} & 171.78 &  20.49 & \num{6.1e-04} & 130.85 &  10.06 & \num{1.9e-04} \\
 \num{5e+04} & 337.56 &  55.64 & \num{5.0e-04} & 197.19 &  11.68 & \num{7.9e-06} \\
\hline
\end{tabular}}
\caption{Computational results on D-optimal experiment design with design space $\chi_2(n)$.}
\label{table-dopt-design-2}
\end{table}

\begin{table}[htb!]
\centering
{
\begin{tabular}{c|ccc|ccc}\hline 
   \texttt{QP Solver}        & \multicolumn{3}{c|}{\texttt{FWPN}}            & \multicolumn{3}{c}{\texttt{NewVEM}}                               \\ \hline
$n$       & \texttt{TTime} & \texttt{QPTime} & $\lambda^k $                        & \texttt{TTime} & \texttt{QPTime} & $\lambda^k $                        \\ \hline 
 \num{1e+04} &  54.45 &  17.25 & \num{1.0e-04} &   7.81 &   1.27 & \num{6.7e-04} \\
 \num{2e+04} &  54.83 &   8.28 & \num{6.9e-04} &  27.12 &   2.34 & \num{4.5e-04} \\
 \num{3e+04} & 206.33 &  38.70 & \num{6.7e-06} &  59.05 &   4.24 & \num{3.5e-04} \\
 \num{4e+04} & 171.37 &  20.36 & \num{5.7e-04} & 101.86 &   8.27 & \num{9.7e-04} \\
 \num{5e+04} & 240.29 &  26.95 & \num{6.7e-04} & 156.19 &  12.99 & \num{8.7e-04} \\
\hline
\end{tabular}}
\caption{Computational results on D-optimal experiment design with design space $\chi_3(n)$.}
\label{table-dopt-design-3}
\end{table}

We see from Tables \ref{table-dopt-design}--\ref{table-dopt-design-3} that Algorithm \ref{alg-pn} solves all the problems successfully to the desired accuracy by using either \texttt{FWPN} or \texttt{NewVEM}. For the comparison of the computational time, we see that \texttt{NewVEM} is much more efficient than \texttt{FWPN} based on the recorded \texttt{QPTime}. The efficiency gained in solving the QP subproblems further reduces the total computational time for solving the problem \eqref{eq-dopt-design} to the desired accuracy, as indicated by the recorded \texttt{TTime}. Careful readers may also observe that the reduction in the total computational time is more than the reduction in computational time for solving the QP subproblems via \texttt{VEM}. The reason for this scenario is that \texttt{VEM} tends to return sparser solutions, which further reduces the cost in computing the gradient and the Hessian of the function $f$. However, another bottleneck of applying  Algorithm \ref{alg-pn} for solving problem \eqref{eq-dopt-design} is in the evaluation of $\nabla f(\cdot)$ and $\nabla^2 f(\cdot)$. We leave it as a further research topic on how to efficiently implement Algorithm \ref{alg-pn} such that the cost for evaluating the gradient and Hessian of $f$ is minimized. Lastly, we want to highlight that \texttt{NewVEM} is able to handle problems with the generalized simplex constraint due to the same reason as mentioned in Section \ref{subsection-random-qp}, which is more favorable than \texttt{FWPN} in practice.

\section{Conclusions and future research}\label{section-conclusions}

We have proposed an inexact projected Newton method combined with a vertex exchange method for solving the generalized simplex-constrained self-concordant minimization problem, which is fundamental in many real-world applications. The proposed double-loop framework achieves local linear convergence for both the outer and inner loops, ensuring high efficiency and suitability for the targeted optimization problem. Since the method requires a feasible starting point, we have also developed a highly efficient semismooth Newton method for computing projections onto the feasible set and established its convergence properties. Finally, extensive numerical experiments validate the strong practical performance of our approach, showing that it outperforms state-of-the-art solvers in both efficiency and accuracy.

Several future research directions are worth exploring. First, our results suggest that a similar framework can be extended to solve other important classes of constrained optimization problems. As an example, Appendix~\ref{section-sc1m} illustrates its application to generalized simplex-constrained $\text{SC}^1$ minimization. Further investigation is needed to evaluate the framework’s practical performance across a broader range of applications, including the exact optimal experimental design. Second, while the inexact projected Newton method is proven to have linear convergence, this contrasts with the typical expectation that Newton-type methods exhibit superlinear or even quadratic convergence. Closing this gap requires more advanced analytical tools. Last but not least, the linear convergence rate of the vertex exchange method depends on a nondegeneracy condition, which may be relatively restrictive. Investigating how to relax this assumption remains an important open question.


\section*{Acknowledgement}
Haizhao Yang was partially supported by the US National Science Foundation under awards DMS-2244988, DMS-2206333, the Office of Naval Research Award N00014-23-1-2007, and the DARPA D24AP00325-00.

{We thank the editor and the anonymous reviewer for taking care of our paper and providing numerous valuable suggestions, which have helped to significantly improve the quality of the paper.}

\section*{Conflict of interest}
The authors declare that they have no conflict of interest.

\appendix
\section{Proof of Theorem \ref{thm-convergence-ssn}} \label{section-appendix}

Since $v^k+\varepsilon^k >0$, $\phi'(y^k)\neq 0$, and 
    \begin{equation}
        \label{eq-thm-ssn-inprod}
        \phi'(y^k)\Delta y^k 
        = -\frac{1}{v^k+\varepsilon^k}(\phi'(y^k))^2  < 0,
    \end{equation}
    we see that $\Delta y^k$ is always a descent direction of $\phi$ for all $k\geq 0$, and hence, the Algorithm \ref{alg-ssn} is well-defined. Since the constraint matrix for problem \eqref{eq-proj-F} $e^T$ has full row rank, $\ell < u$, and $e^T\ell < b < e^Tu$, we see from \cite[Theorem 17' \& Theorem 18']{rockafellar1974conjugate} that the level set $\{y\in\mathbb{R}\;:\; \phi(y) \leq \phi(y^0)\}$ is a closed and bounded convex set. Thus, the fact that $\phi(y^k)\leq \phi(y^0)$ for all $k\geq 0$ implies that the whole sequence $\{y^k\}$ is bounded, and hence it admits at least one accumulation point. For the rest of the proof, we denote $\bar{y}$ as an arbitrary accumulation point of $\{y^k\}$, i.e., there exists an infinite sequence $\{k_j\}_{j\geq 0}$ such that $\lim_{j\to\infty}y^{k_j} = \bar{y}$. 
    
    We next prove that $\phi(\bar{y}) = 0$ by contradiction. To this end, we assume on the contrary that $\phi(\bar{y}) \neq 0$. By the continuity of $\phi'$ and the boundedness of the sequence $\{y^k\}$, we see that there exists positive constants $\gamma_1$ and $\gamma_2$ such that $\gamma_1 \leq |\phi'(y^{k_j})| \leq \gamma_2$, and there exists a positive constant $\bar{\varepsilon}$ such that $\varepsilon^{k_j}\geq \bar{\varepsilon}$, for $j\geq 0$ sufficiently large. Since $v^{k_j}\leq n$ and $\varepsilon^{k_j} \leq \tau_1\tau_2\leq 1$, we see form \eqref{eq-thm-ssn-inprod} that
    \begin{equation}
        \label{eq-thm-ssn-neg}
        \phi'(y^{k_j})\Delta y^{k_j} = -\frac{1}{v^{k_j}+\varepsilon^{k_j}}(\phi'(y^{k_j}))^2 \leq -\frac{1}{n+1}\gamma^2_1 < 0,
    \end{equation}
    and 
    \[
        |\Delta y^{k_j}| = \frac{1}{v^{k_j}+\varepsilon^{k_j}}|\phi'(y^{k_j})|\in \left[\frac{1}{n+1}\gamma_1, \frac{1}{\bar{\varepsilon}}\gamma_2\right],
    \]
    for $j \geq 0$ sufficiently large. Hence, by taking a subsequence if necessary, we may assume that $\lim_{j\to\infty}\Delta y^{k_j} = \Delta \bar{y}$, for some $\Delta\bar{y}\in\mathbb{R}$. As a result, \eqref{eq-thm-ssn-neg} implies that 
    \begin{equation}
        \label{eq-thm-ssn-neg-limit}
        \phi'(\bar{y})\Delta\bar{y} < 0.
    \end{equation}
    Moreover, it holds from \eqref{eq-thm-ssn-neg} that 
    \begin{equation}
        \label{eq-thm-ssn-sup}
        \limsup_{j\to\infty}\; \phi'(y^{k_j})\Delta y^{k_j} < 0.
    \end{equation}
    By the continuity of the function $\phi$ and the boundedness of the sequence $\{y^{k_j}\}$, we see that 
    \[
        \lim_{j\to\infty}\; \phi(y^{k_{j+1}}) - \phi(y^{k_j}) = 0.
    \]
    Now since the sequence $\{\phi(y^k)\}$ is non-increasing, we see that 
    \[
        \mu\delta^{m_{k_j}}\phi'(y^{k_j})\Delta y^{k_j} \geq \phi(y^{k_{j}+1}) - \phi(y^{k_j}) \geq \phi(y^{k_{j+1}}) - \phi(y^{k_j}),
    \]
    which implies that
    \begin{equation}
        \label{eq-thm-ssn-inf}
        \liminf_{j\to\infty}\; \delta^{m_{k_j}}\phi'(y^{k_j})\Delta y^{k_j} \geq 0.
    \end{equation}
    Consequently, \eqref{eq-thm-ssn-sup} and \eqref{eq-thm-ssn-inf} imply that $\limsup_{j\to\infty}\delta^{m_{k_j}} \leq 0$. Hence, it holds that 
    \begin{equation}
        \label{eq-thm-ssn-delta}
        \lim_{j\to\infty}\delta^{m_{k_j}} = 0.
    \end{equation}
    Then, for the line search scheme described in Line 10 of Algorithm \ref{alg-ssn}, we see that 
    \begin{equation}
        \label{eq-thm-ssn-ls}
        \phi(y^{k_j} + \delta^{m_{k_j}-1}\Delta y^{k_j}) - \phi(y^{k_j}) > \mu \delta^{m_{k_j}-1}\phi'(y^{k_j}) \Delta y^{k_j}.
    \end{equation}
    By dividing $\delta^{m_{k_j}-1}$ on the both side of \eqref{eq-thm-ssn-ls} and using \eqref{eq-thm-ssn-delta}, we derive that 
    \[
        \phi'(\bar{y})\Delta \bar{y} = \lim_{j\to\infty} \frac{\phi(y^{k_j} + \delta^{m_{k_j}-1}\Delta y^{k_j}) - \phi(y^{k_j})}{\delta^{m_{k_j}-1}} \geq \mu \phi'(\bar{y})\Delta \bar{y}, 
    \]
    which together with the fact that $\mu\in (0,1/2)$ shows that $ \phi'(\bar{y})\Delta \bar{y} \geq 0$, hence contradicting \eqref{eq-thm-ssn-neg-limit}. Therefore, $\phi'(\bar{y}) = 0$, and by the convexity of $\phi$, $\bar{y}$ must be an optimal solution to the problem \eqref{eq-proj-F-dual-new}.

    Next, we assume that $\{i\in[n]\;:\; \ell_i < \bar{y} + \bar{x}_i < u_i\} \neq \emptyset $ and prove \eqref{eq-thm-ssn-quad}. The latter condition indicates that there exists an open neighborhood of $\bar{y}$, denoted as $\mathcal{N}$, such that for any $y\in \mathcal{N}$, the set $\partial^2\phi(y)$ is uniformly bounded and $|\phi'(y)| < \tau_2$, and for $j\geq 0$ sufficiently large, $v^{k_j}\geq 1$. By the strong semismoothness of $\phi'$, we see that, for $j\geq 0$ sufficiently large,
    \[
        \phi'(y^{k_j}) - \phi'(\bar{y}) - v^{k_j}(y^{k_j} - \bar{y}) = O(|y^{k_j} - \bar{y}|^2).
    \]

    Notice that for $j\geq 0$ sufficiently large, $\varepsilon^{k_j} = \tau_1|\phi(y^{k_j})|$, and hence, it holds that 
    \begin{equation}
        \label{eq-thm-ssn-ykjp}
        \begin{aligned}
            &\; |y^{k_j} + \Delta y^{k_j} - \bar{y}|\\
            = &\; |y^{k_j} - \bar{y} - (v^{k_j}+\varepsilon^{k_j})^{-1}\phi'(y^{k_j})| \\
            = &\; (v^{k_j}+\varepsilon^{k_j})^{-1}|\phi'(y^{k_j}) - \phi'(\bar{y}) - (v^{k_j}+\varepsilon^{k_j})(y^{k_j} - \bar{y})| \\
            \leq &\; O(|\phi'(y^{k_j}) - \phi'(\bar{y}) - (v^{k_j}+\varepsilon^{k_j})(y^{k_j} - \bar{y})|) \\
            \leq &\; O(|\phi'(y^{k_j}) - \phi'(\bar{y}) - v^{k_j}(y^{k_j} - \bar{y})|) + {O(|\phi'(y^{k_j})|\,|y^{k_j} - \bar{y}|)} \\
            \leq &\; O(|y^{k_j} - \bar{y}|^2) + o(|y^{k_j} - \bar{y}|) \\
            = &\; o(|y^{k_j} - \bar{y}|).
        \end{aligned}
    \end{equation}

    By Lemma \ref{lemma-taylor-expansion}, we have that 
    \begin{equation}
        \label{eq-thm-ssn-taylor}
        \begin{aligned}
            \phi(y^{k_j}) = &\; \phi(\bar{y}) + \frac{1}{2} v_{(1)}^{k_j} (y^{k_j} - \bar{y})^2, \\
            \phi(y^{k_j} + \Delta y^{k_j}) = &\; \phi(\bar{y}) + \frac{1}{2} v_{(2)}^{k_j} (y^{k_j} + \Delta y^{k_j} - \bar{y})^2,
        \end{aligned}
    \end{equation}
    where $v_{(1)}^{k_j} \in \partial^2\phi(y_{(1)}^{k_j})$ and $v_{(2)}^{k_j} \in \partial^2\phi(y_{(2)}^{k_j})$ for some $y_{(1)}^{k_j}$ lying between $y^{k_j}$ and $\bar{y}$, and $y_{(2)}^{k_j}$ lying between $y^{k_j} + \Delta y^{k_j}$ and $\bar{y}$, respectively. Note from \cite[Lemma 3.1]{facchinei1995minimization} that \eqref{eq-thm-ssn-taylor} implies that 
    \begin{equation}
        \label{eq-thm-ssn-dy}
        |y^{k_j} + \Delta y^{k_j} - \bar{y}| = o(|\Delta y^{k_j}|),\quad \lim_{j\to\infty} \frac{|\Delta y^{k_j}|}{|y^{k_j} - \bar{y}|} = 1,
    \end{equation}
    for $j\geq 0$ sufficiently large. Therefore, for $j\geq 0$ sufficiently large, $y_{(1)}^{k_j}, y_{(2)}^{k_j}\in \mathcal{N}$, and hence, $v_{(1)}^{k_j},v_{(2)}^{k_j}$ are uniformly bounded. Then, \eqref{eq-thm-ssn-ykjp}, \eqref{eq-thm-ssn-taylor}, \eqref{eq-thm-ssn-dy} and the fact that $\phi'$ is strongly semismooth yield that
    \[
        \begin{aligned}
            &\; \phi(y^{k_j} + \Delta y^{k_j}) - \phi(y^{k_j}) - \frac{1}{2}\phi'(y^{k_j}) \Delta y^{k_j} \\
            = &\;  -\frac{1}{2} v_{(1)}^{k_j} (y^{k_j} - \bar{y})^2 - \frac{1}{2}\phi'(y^{k_j}) \Delta y^{k_j} + o(|\Delta y^{k_j}|^2) \\
            = &\; -\frac{1}{2} v_{(1)}^{k_j} (y^{k_j} + \Delta y^{k_j} - \bar{y})(y^{k_j} - \bar{y})  - \frac{1}{2} (\phi'(y^{k_j}) - v_{(1)}^{k_j} (y^{k_j} - \bar{y})) \Delta y^{k_j} + o(|\Delta y^{k_j}|^2) \\
            = &\; - \frac{1}{2} (\phi'(y^{k_j}) - v_{(1)}^{k_j} (y^{k_j} - \bar{y})) \Delta y^{k_j} + o(|\Delta y^{k_j}|^2) \\
            = &\;  - \frac{1}{2} (\phi'(y^{k_j}) - \phi'(\bar{y}) - v_{(1)}^{k_j} (y^{k_j} - \bar{y})) \Delta y^{k_j} + o(|\Delta y^{k_j}|^2) \\
            = &\; O(|y^{k_j} - \bar{y}|^2)o(|\Delta y^{k_j}|) + o(|\Delta y^{k_j}|^2) \\
            = &\; o(|\Delta y^{k_j}|^2),
        \end{aligned}
    \]
    for $j\geq 0$ sufficiently large. The above relation further shows that 
    \[
        \phi(y^{k_j} + \Delta y^{k_j}) - \phi(y^{k_j}) \leq \mu \phi'(y^{k_j}) \Delta y^{k_j},
    \]
    for $j\geq 0$ sufficiently large since $\mu\in (0,1/2)$. Hence, for $j\geq 0$ sufficiently large, we can take $m_{k_j} = 0$, i.e., $\alpha_{k_j} = 1$, which indicates that $y^{k_j+1} = y^{k_j} + \Delta y^{k_j}$, and $|y^{k_j+1} - \bar{y}| < \frac{1}{2}|y^{k_j} - \bar{y}|$ (due to \eqref{eq-thm-ssn-ykjp}). The last inequality shows that $y^{k_j+1}\in \mathcal{N}$ and $\lim_{j\to\infty}y^{k_j+1}= \bar{y}$. Repeating the above process on $\{y^{k_j+1}\}$, we see that $y^{k_j+2} = y^{k_j+1} + \Delta y^{k_j+1}$, for sufficiently large $j$. By mathematical induction, it holds that the whole sequence $\{y^k\}$ converges to $\bar{y}$, and for $k\geq 0$ sufficiently large, $\alpha^k = 1$, i.e., the unit step size is eventually achieved. Now, by applying \eqref{eq-thm-ssn-dy} to the whole sequence $\{y^k\}$, similar to \eqref{eq-thm-ssn-ykjp}, we see that, for $k\geq 0$ sufficiently large,
    \[
        |y^{k+1} - \bar{y}| \leq O(|\phi'(y^{k}) - \phi'(\bar{y}) - v^k(y^k - \bar{y})|) + O(|\phi'(y^k)||y^k - \bar{y}|) = O(|y^k - \bar{y}|^2),
    \]
    which proves \eqref{eq-thm-ssn-quad}. Therefore, the proof is completed.
    
\section{Proof of Lemma~\ref{lemma-qp-descend}} \label{proof-lemma-qp-descend}

By the conditions that $e^T\ell < b < e^Tu$ and $\ell < u$ as stated in Assumption \ref{assumption-feasible-set}, the feasible set of problem \eqref{eq-qp} is nonempty and the indices $s$ and $t$ are well-defined. 
First, suppose that $g_s \leq g_t$. Let us consider the following two cases. Case 1: {$\{i\;:\; \ell_i  < x_i < u_i  \} \neq \emptyset$; and 
	Case 2: $\{i\;:\; \ell_i  < x_i < u_i  \} = \emptyset$.}

For Case 1, we see that 
\[
\begin{aligned}
	g_s = &\; \max\left\{g_i\;:\; \ell_i  <  x_i \leq u_i \right\} 
	\geq  \max\left\{g_i\;:\; \ell_i  < x_i < u_i  \right\} \\
	\geq &\; \min\left\{g_i\;:\; \ell_i   < x_i < u_i    \right\} 
	\geq  \min\left\{g_i\;:\; \ell_i \leq x_i < u_i  \right\} 
	=  g_t.
\end{aligned}
\]
This, together with the assumption that $g_s \leq g_t$, implies that $g_s = g_t$ and all inequalities in the above are in fact equalities. In particular, there exists a constant $y\in\mathbb{R}$ such that 
\[
g_i = y,\quad \forall\; i\in \{i\;:\; \ell_i  \leq x_i \leq u_i   \} = [n].
\]
By setting $z_i = 0$ for $i\in[n]$, we can construct a vector $z\in \mathbb{R}^n$ and a real number $y\in\mathbb{R}$ such that the triple $(x,y,z)$ satisfies the KKT condition \eqref{eq-kkt-qp-new}, i.e., $(x, y, z)$ is a KKT point. Thus, $x$ is the optimal solution for the primal problem \eqref{eq-qp} and $(y,z)$ is an optimal solution for the dual problem \eqref{eq-qp-dual}. 

For Case 2, i.e., $\{i\;:\; \ell_i < x_i < u_i\} = \emptyset$, we know that 
\[
\begin{aligned}
	g_s = &\; \max \left\{g_i\;:\; x_i > \ell_i\right\} = \max \left\{g_i\;:\; i \in J_u(x)\right\}, \\
	g_t = &\; \min \left\{g_i\;:\; x_i < u_i\right\} = \min \left\{g_i\;:\; i \in J_\ell(x)\right\},
\end{aligned} 
\]
which, together with the assumption that $g_s \leq g_t$, implies that 
\[
\max \left\{g_i\;:\; i \in J_u(x)\right\} \leq \min \left\{g_i\;:\; i \in J_\ell(x)\right\}.
\]
Let $y\in [g_s, g_t]$ be an arbitrary real number, we see that for any $i\in J_u(x)$, there exists $z_i \leq 0$ such that $g_i = y + z_i$, and for any $i\in J_\ell(x)$, there exists $z_i \geq 0$ such that $g_i = y + z_i$. Consequently, we have constructed a KKT point $(x, y, z)$, and due to the same reason as in Case 1, $x$ is the optimal solution for the primal problem \eqref{eq-qp} and $(y,z)$ is an optimal solution for the dual problem \eqref{eq-qp-dual}.

Next, we assume that $x$ is an optimal solution of problem \eqref{alg-qp}. Then there exist $y\in \mathbb{R}$ and $z\in \mathbb{R}^n$ satisfying the KKT condition \eqref{eq-kkt-qp-new}. It holds that 
\[
g_s = \max\{y+z_i\;:\; i\notin J_\ell(x)\} \leq y,\quad g_t = \min\{y + z_i\;:\; i \notin J_u(x)\} \geq y.
\]
Hence, $g_s \leq g_t$. This proves the first statement of the lemma.

For the second part of the proof, we verify that $x^+$ is a feasible solution of problem \eqref{eq-qp}. It is clear that 
\[
e^Tx^+ = e^Tx + \eta -\eta = b,  
\]
and from the condition that $\eta\in [0, \min\{x_{s} - \ell_{s}, u_{t} - x_{t}\}]$, it holds that
\[
\begin{aligned}
	x^+_{i} = &\; x_i \in [\ell_i, u_i],\; \forall \;i\notin\{s, t\}, \\
	x^+_{s} = &\; x_{s} - \eta \in [x_{s} - (x_{s} - \ell_{s}), x_{s}]\subseteq [\ell_{s}, u_{s}], \\
	x^+_{t} = &\; x_{t} + \eta \in [x_{t}, x_{t} + u_{t} - x_{t} ] \subseteq [\ell_{t}, u_{t}].
\end{aligned}
\]
Therefore, $x^+$ is a feasible solution for problem \eqref{eq-qp}.

Finally, to verify that $e_{t} - e_{s}$ is a descent direction of the function $q(x):=\frac{1}{2}x^TQx + c^Tx$ when $g_s > g_t$, we only need to verify that $g^T (e_{t} - e_{s}) < 0$. This is straightforward since 
\[
g^T (e_{t} - e_{s}) = g_t -  g_s < 0 .
\]
Therefore the proof is completed.

\section{Proof of Lemma~\ref{lemma-step-size}} \label{proof-lemma-step-size}

Note that 
\[
\begin{aligned}
	q(x^k + \eta d^k) = &\; \frac{1}{2}(x^k + \eta d^k)^TQ(x^k + \eta d^k) + c^T(x^k + \eta d^k) \\
	= &\; \frac{\eta^2}{2}(d^k)^TQd^k  + \eta(c^Td^k + (x^k)^TQd^k) + q(x^k) \\
	= &\; \frac{Q_{s^k, s^k} + Q_{t^k, t^k} - Q_{s^k, t^k} - Q_{t^k, s^k}}{2} \eta^2 + (g^k_{t^k} - g^k_{s^k})\eta + q(x^k).
\end{aligned}
\]
Since $Q\in\mathbb{S}_{++}^n$ and {$g^k_{s^k} > g^k_{t^k}$} before the termination of Algorithm \ref{alg-qp}, we see that $Q_{s^k, s^k} + Q_{t^k, t^k} - Q_{s^k, t^k} - Q_{t^k, s^k} > 0$ and $g^k_{t^k} - g^k_{s^k} < 0$ (see Lemma \ref{lemma-qp-descend}). Hence, it is easy to check that the optimal step-size is given by 
\[
\eta^k:= \min\left\{\eta_{\rm max}^k, \frac{g^k_{s^k} - g^k_{t^k}}{Q_{s^k, s^k} + Q_{t^k, t^k} - Q_{s^k, t^k} - Q_{t^k, s^k}}\right\}.
\]
To show that $\eta^k>0$, it suffices to verify that $\eta_{\rm max}^k$ is positive which is obvious by the definitions of $s^k$ and $t^k$. Therefore, the proof is completed.

\section{Constrained convex $\textrm{SC}^1$ minimization}\label{section-sc1m}

In this section, we consider one important of objective functions $f$, namely the class of $\textrm{SC}^1$ functions whose formal definitions will be given shortly. We present an inexact Newton-type method, which are originated from \cite{pang1995globally,chen2010inexact}, for solving these problems. At each iteration of a Newton method, one needs to solve a strongly convex quadratic programming problem of the form in \eqref{eq-qp}. Thus, Algorithm \ref{alg-qp} and Algorithm \ref{alg-ssn} set up the backbones of the presented inexact Newton-type method.

Let $\mathcal{O}$ be an open subset of $\mathbb{R}^n$. A function $f:\mathcal{O}\to \mathbb{R}$ is said to be $\textrm{LC}^1$ if it is differentiable everywhere on $\mathcal{O}$ and its gradient $\nabla f:\mathcal{O}\to \mathbb{R}^n$ is locally Lipschitz. Moreover, $f$ is said to be $\textrm{SC}^1$ if $\nabla f$ is semismooth everywhere on $\mathcal{O}$; see Definition \ref{def-semi-smooth}. In this subsection, we consider the problem \eqref{eq-sc-min} with $f$ being $\textrm{SC}^1$ and $\mathcal{O}$ containing $\mathcal{F}$. 

There are many interesting problems that can be modeled as convex  $\textrm{SC}^1$ minimization problems \cite{pang1995globally}. In the same work, the authors proposed a globally convergent SQP Newton method whose subproblems need to be solved exactly. Moreover, in order to ensure the local fast convergence rate, a stringent BD-regularity assumption was required. Later, a globally and superlineraly convergent inexact SQP Newton method was proposed in \cite{chen2010inexact}, and the BD-regularity assumption was relaxed by to a more realistic one. Furthermore, the subproblems were only required to be solved inexactly. The detailed description of the inexact SQP Newton method is presented in the following Algorithm \ref{alg-sqp}.

\begin{algorithm}[htb!]
	\caption{An inexact SQP Newton method for constrained $\textrm{SC}^1$ minimization.}
	\label{alg-sqp}
	\begin{algorithmic}[1]
		\STATE \textbf{Input:} $\tilde{x}^0\in \mathbb{R}^n$, and  parameters $\mu\in (0,1/2)$ and $\gamma,\; \rho,\; \delta, \;\tau_1,\; \tau_2\; \in (0,1)$.
		\STATE Set $x^0 = \mathrm{Proj}_{\mathcal{F}}(\tilde{x}^0)$, $f^{\rm pre} = \|G(x^0)\|$.
		\FOR{$k\geq 0$}
		\IF{$\|G(x^k)\| = 0$}
		\STATE \textbf{Output:} $x^k$.
		\ENDIF
		
		\STATE Compute $\varepsilon^k:=\tau_1\min\{\tau_2, \|G(x^k)\|\}$ and $\rho^k:=\min\{\rho, \|G(x^k)\|\}$.
		
		\STATE Select $\mathcal{V}_k\in\partial_B\nabla f(x^k)$ and apply Algorithm \ref{alg-qp} {(using $\tilde{x}^k$ as the initial point)} for computing an approximate solution $\tilde{x}^{k+1}$ of the following QP:
		\[
		\min_{x\in\mathbb{R}^n}\; q_k(x):=\frac{1}{2}(x - x^k)^T(\mathcal{V}_k+\varepsilon^k I_n)(x - x^k) + \nabla f(x^k)^T (x-x^k)\quad \mathrm{s.t.} \quad x\in \mathcal{F},
		\]
		in the sense that $\tilde{x}^{k+1} \in \mathcal{F}$, $q_k(\tilde{x}^{k+1}) \leq 0$, and 
		\[
		\|\tilde{x}^{k+1} - \mathrm{Proj}_\mathcal{F}(\tilde{x}^{k+1} - \nabla q_k(\tilde{x}^{k+1}))\|\leq \rho^k\|G(x^k)\|.
		\]
		
		\IF{$\|G(\tilde{x}^{k+1})\| \leq \gamma f^{\rm pre}$}
		\STATE Set $x^{k+1} = \tilde{x}^{k+1}$, $f^{\rm pre} = \|G(x^{k+1})\|$.
		\ELSE
		\STATE Set $\Delta x^k:= \tilde{x}^{k+1} - x^k$ and $x^{k+1} = x^k + \alpha^k \Delta x^k$, where $\alpha^k:= \delta^{m_k} $ and $m_k$ is the smallest nonnegative integer $m$ for which 
		$$
		f(x^k + \delta^m\Delta x^k)\leq f(x^k) + \mu\delta^m\nabla f(x^k)^T\Delta x^k.
		$$
		\IF{$\|G(x^{k+1})\| \leq \gamma f^{\rm pre}$}
		\STATE Set  $f^{\rm pre} = \|G(x^{k+1})\|$.
		\ENDIF
		\ENDIF
		\ENDFOR 
	\end{algorithmic}
\end{algorithm}

The convergence properties of Algorithm \ref{alg-sqp} are summarized as follows; see \cite{chen2010inexact} for a detailed analysis. 

\begin{theorem}
	\label{theorem-convergence-sqp}
	Suppose that Algorithm \ref{alg-sqp} generates an infinite sequence $\{x^k\}$. Then, it holds that $\lim_{k\to\infty}\; f(x^k) = f(x^*)$ where $x^*\in\mathcal{F}$ is an optimal solution of problem \eqref{eq-sc-min}. Moreover, suppose that $x^*$ is an accumulation point of the sequence $\{x^k\}$ and $I_n - \mathcal{S}(I_n-\mathcal{V})$ is nonsingular for any $\mathcal{S}\in \partial_B\mathrm{Proj}_\mathcal{F}(x^* - \nabla f(x^*))$ and $\mathcal{V}\in \partial_B\nabla f(x^*)$. Then, the whole sequence $\{x^k\}$ converges to $x^*$ superlinearly, i.e., 
	\[
	\|x^{k+1} - x^k\| = o(\|x^k - x^*\|).
	\]
	Moreover, if $\nabla f$ is strongly semismooth, then the rate of convergence is quadratic, i.e., 
	\[
	\|x^{k+1} - x^k\| = O(\|x^k - x^*\|^2).
	\]
\end{theorem}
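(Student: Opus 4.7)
The plan is to follow the standard globalization analysis for inexact semismooth Newton methods, splitting the argument into a global convergence portion and a local rate-of-convergence portion. First observe that the subproblem at Line 8 is a strongly convex QP of the exact form in \eqref{eq-qp}: the Hessian $\mathcal{V}_k+\varepsilon^k I_n$ is positive definite since $\mathcal V_k\succeq 0$ (by convexity of $f$) and $\varepsilon^k>0$, so Algorithm \ref{alg-qp} combined with Algorithm \ref{alg-ssn} can indeed produce an iterate $\tilde{x}^{k+1}$ meeting all three acceptance criteria. Combining $\tilde{x}^{k+1}\in\mathcal{F}$ with $q_k(\tilde{x}^{k+1})\le 0$ yields
$$\nabla f(x^k)^T\Delta x^k \;\le\; -\tfrac{1}{2}(\Delta x^k)^T(\mathcal{V}_k+\varepsilon^k I_n)\Delta x^k \;\le\; -\tfrac{\varepsilon^k}{2}\|\Delta x^k\|^2,$$
so $\Delta x^k=\tilde x^{k+1}-x^k$ is a descent direction for $f$ at $x^k$ whenever $\Delta x^k\ne 0$ and the Armijo rule in Line 12 is well-defined.

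For global convergence I would split on whether an acceptance check $\|G(\cdot)\|\le\gamma f^{\rm pre}$ is triggered infinitely often. If yes, each acceptance contracts $f^{\rm pre}$ by a factor at most $\gamma<1$, so $f^{\rm pre}\to 0$ and hence $\|G(x^k)\|\to 0$ along the accepted subsequence; continuity of $G$ together with boundedness of $\{x^k\}$ then gives $G(x^*)=0$ at every accumulation point $x^*$, whence optimality. If no, only finitely many acceptances occur and from some index on the Armijo branch governs the iterates; $f(x^k)$ is then monotonically non-increasing and bounded below by compactness of $\mathcal F$, and the standard sufficient-decrease argument combined with the descent bound above yields $\|\Delta x^k\|\to 0$. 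The inexact-residual bound $\|\tilde{x}^{k+1}-\mathrm{Proj}_\mathcal{F}(\tilde{x}^{k+1}-\nabla q_k(\tilde{x}^{k+1}))\|\le\rho^k\|G(x^k)\|$, the nonexpansiveness of the projection, and local Lipschitzness of $\nabla f$ then force $\|G(\tilde x^{k+1})\|\to 0$, contradicting the standing lower bound $\|G(\cdot)\|>\gamma f^{\rm pre}$ and ruling out this scenario. Either way $f(x^k)\to f(x^*)$ for an optimal $x^*$.

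For the local rate, let $x^*$ be an accumulation point satisfying the nonsingularity hypothesis. Since $\mathcal F$ is polyhedral, $\mathrm{Proj}_\mathcal{F}$ is piecewise affine and hence strongly semismooth by Lemma \ref{lemma-semismooth-piecewise-linear}; together with the semismoothness of $\nabla f$ this makes the natural mapping $G$ in \eqref{eq-kkt-sc-min} semismooth at $x^*$, with elements of $\partial_B G(x^*)$ of the form $I_n-\mathcal S(I_n-\mathcal V)$ assumed nonsingular, and upper semicontinuity of the B-subdifferential yields uniformly bounded inverses in a neighborhood of $x^*$. Standard inexact semismooth Newton perturbation analysis, combined with the forcing sequences $\varepsilon^k,\rho^k=O(\|G(x^k)\|)$ and the local equivalence $\|G(x)\|\asymp\|x-x^*\|$ near a regular solution, then shows that for all sufficiently large $k$ the full Newton step is automatically accepted, so $x^{k+1}=\tilde x^{k+1}$, and
$$\|x^{k+1}-x^*\|\;=\;o(\|x^k-x^*\|),$$
which implies $\|x^{k+1}-x^k\|=o(\|x^k-x^*\|)$ and drives the whole sequence to $x^*$. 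If $\nabla f$ is strongly semismooth the semismooth residual sharpens to $O(\|x^k-x^*\|^2)$, upgrading the rate to quadratic.

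The main obstacle, beyond the standard semismooth Newton template, is the bookkeeping that reconciles the Hessian regularization $\varepsilon^k$, the inexact-solve tolerance $\rho^k$, and the semismooth residual of the nonsmooth fixed-point equation $G(x)=0$: all three must decay strictly faster than $\|x^k-x^*\|$ so that the leading-order semismooth expansion is preserved. Once this is in place the remainder is routine and parallels \cite{chen2010inexact}, whose analysis I would adapt and defer in full to an appendix.
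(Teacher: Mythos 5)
The paper does not actually prove this theorem: it only states the result and defers entirely to \cite{chen2010inexact} (``see \cite{chen2010inexact} for a detailed analysis''), and your outline follows exactly that reference's standard route --- descent property of the inexact SQP step, a dichotomy on whether the acceptance test $\|G(\cdot)\|\le\gamma f^{\rm pre}$ fires infinitely often, and a local BD-regular semismooth-Newton rate analysis --- so it is essentially the same approach. The only points you assert rather than argue (handling the possibly non-monotone acceptance steps when they occur infinitely often so that the \emph{whole} sequence satisfies $f(x^k)\to f(x^*)$, and showing the acceptance test eventually admits the full inexact Newton step near the regular accumulation point) are precisely the bookkeeping carried out in \cite{chen2010inexact}, to which the paper itself appeals.
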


\bibliographystyle{spmpsci}      
\bibliography{references}   

\end{document}